\theoremstyle{plain}
\newtheorem{thm}{Theorem}[section]
\newtheorem{lem}[thm]{Lemma}
\newtheorem{cor}[thm]{Corollary}
\newtheorem{prop}[thm]{Proposition}
\newtheorem{conj}[thm]{Conjecture}
\theoremstyle{definition}
\newtheorem{rem}[thm]{Remark}
\DeclareMathOperator{\Aut}{Aut}
\DeclareMathOperator{\Cent}{Cent}
\begin{document} 

\title[On finite CG-groups]{On finite CG-groups} 

\author[S. J. Baishya  ]{Sekhar Jyoti Baishya} 
\address{S. J. Baishya, Department of Mathematics, Pandit Deendayal Upadhyaya Adarsha Mahavidyalaya, Behali, Biswanath-784184, Assam, India.}

\email{sekharnehu@yahoo.com}

\begin{abstract}
A finite group $G$ is a CG-group if  $\mid \Cent(G) \mid =\mid G' \mid+2$, where $G'$ is the commutator subgroup and $\Cent(G)$ is the set of distinct element centralizers of $G$. In this paper we give some results on CG-groups. We also give a negative answer to \cite[Conjecture 2.3]{con} given by K. Khoramshahi and M. Zarrin.
\end{abstract}

\subjclass[2010]{20D60, 20D99}
\keywords{Finite group, Minimal non-abelian group, Minimal non-nilpotent group}
%\thanks{*This paper is a part of my Ph.D thesis.}
\maketitle

\section{Introduction} \label{S:intro}

Given a group $G$, let $\Cent(G)$ denote the set of centralizers of $G$, i.e., $\Cent(G)=\lbrace C(x) \mid x \in G\rbrace $, where $C(x)$ is the centralizer of the element $x$ in $G$. The study of finite groups in terms of $|\Cent(G)|$, becomes an interesting research topic in last few years. Starting with Belcastro and Sherman \cite{ctc092} in 1994  many authors have been studied and characterised finite groups $G$ in terms of $\mid \Cent(G)\mid$. More information on this and related concepts may be found in  \cite{ed09, amiri2019, amiri20191, amiri17, rostami, en09, ctc09, ctc091, ctc099, baishya, baishya1, baishya2, zarrin0941, zarrin0942, non}. 

While studying the number of distinct element centralizers of a finite group $G$, we observe that there are lots of examples of finite groups (including family of finite groups) for which $|\Cent(G)|=\mid G' \mid+2$, where $G'$ is the commutator subgroup of $G$ (the smallest example of such group being $S_3$). We have also observed that such groups have many interesting properties (for example, in most of the cases $G'$ is abelian, proper element centralizers are abelian etc). This motivates us to introduce the notion of CG-group with a hope that this might give some new solvability criterian for a finite group. We call a finite group $G$ to be a CG-group if $\mid \Cent(G) \mid =\mid G' \mid+2$. In this paper, we give some examples of CG-groups and some necessary and sufficient conditions for a finite group to be a CG-group. Apart from these, we also give a negative answer to the conjecture \cite[Conjecture 2.3]{con} given by K. Khoramshahi and M. Zarrin.

In this paper, all groups are finite (however  Lemma \ref{rem144} and Remark \ref{rem1} holds for any group) and all notations are usual. For example $G'$, $Z(G)$ denotes the commutator subgroup  and the center of a group $G$ respectively, $C_n$ denotes the cyclic group of order $n$,  $D_{2n}$ denotes the dihedral group of order $2n$, $A_n$ denotes the alternating group of degree $n$, $S_n$ denotes the symmetric group of degree $n$ and $ C_n {\rtimes}_\theta C_p$ denotes semidirect product of $C_n$ and $C_p$, where $\theta : C_p \longrightarrow \Aut(C_n)$ is a homomorphism.

 \section{Some Examples and basic results}  

We begin with some examples and counter examples of finite CG-groups. As we have already mentioned, the smallest CG-group is $S_3$ (in fact in view of \cite[Corollary 2.5]{baishya}, any group with central quotient of order $pq$ is a CG-group, where $p, q$ are primes, not necessarily distinct). The next higher order CG-groups are $D_8$ and $Q_8$. We will see that any non-abelian group order $pqr$ or $p^4$, where $p, q, r$ are primes not necessarily distinct is a CG-group.
On the other hand, in view of \cite[Theorem 7]{ctc092}, no finite perfect group is a CG-group (recall that a group $G$ is said to be perfect if $G'=G$). Consequently, no finite simple, semisimple group (a group is said to be semisimple if it is a direct product of non-abelian simple groups) is a CG-group and $A_4$ is the only CG-group among the alternating groups $A_n, n\geq 3$. We now give some more examples of finite CG-groups:

\begin{prop}\label{EX111}
The generalized dihedral group $D(m, n)= \langle a, b \mid a^m=b^n=1, bab^{-1}=a^{-1} \rangle$; $m \geq 3, n \geq 2$ and $n$ even is a CG-group. 
\end{prop}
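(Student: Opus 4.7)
The plan is to use the normal form $a^i b^j$ (with $0 \le i < m$, $0 \le j < n$) together with the single commutation rule $b^j a^i = a^{(-1)^j i} b^j$ derived from $bab^{-1}=a^{-1}$. The structural observation that drives everything is that, because $n$ is even, $H := \langle a, b^2 \rangle$ is an abelian subgroup of index $2$ in $G$: its two generators commute, and $|H| = m(n/2)$.

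First I would compute $G'$. From $[b,a] = bab^{-1}a^{-1} = a^{-2}$ one has $\langle a^2\rangle \le G'$, and modulo $\langle a^2\rangle$ the relation $bab^{-1} = a^{-1}$ becomes trivial, so $G/\langle a^2\rangle$ is abelian and $G' = \langle a^2\rangle$. Thus $|G'| = m$ when $m$ is odd and $|G'| = m/2$ when $m$ is even.

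Next I would enumerate $\Cent(G)$. Since $H$ is abelian and proper, every non-central element of $H$ has centralizer exactly $H$, contributing a single centralizer beyond $C(1)=G$. For $x = a^i b^j$ with $j$ odd, the commuting condition with $a^k b^l$ reduces via the commutation rule to
\[
k\bigl(1-(-1)^j\bigr) \equiv i\bigl(1-(-1)^l\bigr) \pmod m,
\]
i.e.\ $2k \equiv i(1-(-1)^l)\pmod m$. A case split on the parity of $l$ gives $2k \equiv 0$ or $2k \equiv 2i \pmod m$. When $m$ is odd this forces $k=0$ (resp.\ $k=i$), so $C(a^i b^j)$ depends only on $i \bmod m$ and one obtains $m$ distinct centralizers. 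When $m$ is even it forces $k\in\{0,m/2\}$ (resp.\ $k\equiv i\pmod{m/2}$), so $C(a^i b^j)$ depends only on $i \bmod (m/2)$ and one obtains $m/2$ distinct centralizers. Each such centralizer differs from $H$ because it contains elements of $G\setminus H$, and different admissible residues of $i$ yield different centralizers because $a^i b^j \in C(a^i b^j)$.

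Adding up $G$, $H$, and this family gives $|\Cent(G)| = m+2 = |G'|+2$ when $m$ is odd and $|\Cent(G)| = m/2 + 2 = |G'|+2$ when $m$ is even, which is the required identity. The main obstacle I anticipate is the bookkeeping in the centralizer classification: handling the two parities of $m$ in a uniform way and verifying distinctness of the $C(a^i b^j)$ across the residue classes of $i$, rather than any conceptual difficulty.
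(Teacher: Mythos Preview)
Your argument is correct. The commutation rule, the identification of $G'=\langle a^2\rangle$, and the case-by-case centralizer count all check out; in particular, for odd $j$ one finds $C(a^i b^j)=\langle a^i b\rangle$ when $m$ is odd and $C(a^i b^j)=\langle a^{m/2},\, a^i b\rangle$ when $m$ is even, which makes the dependence on $i$ modulo $m$ (resp.\ $m/2$) and the distinctness transparent.

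The paper proceeds differently. It also singles out your subgroup $H=\langle a, b^2\rangle$ (written there as $\langle \langle a\rangle, Z(D(m,n))\rangle$) as an abelian normal subgroup of prime index, but then simply invokes the general theorem (\cite[Theorem 2.3]{baishya}, restated later as Proposition \ref{CG7}(a)) that any finite non-abelian group with such a subgroup is a CG-group. So the structural input is the same; the difference is that you carry out the centralizer enumeration by hand, whereas the paper outsources it to a cited result. Your route is fully self-contained and yields the explicit value of $|\Cent(G)|$ in each parity of $m$, at the cost of some bookkeeping; the paper's route is a two-line application of a black box. Either is perfectly adequate here, and indeed your computation is essentially a worked instance of what the proof of that general theorem does.
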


\begin{proof}
By  \cite[Fact 2]{gd}, we have $Z(D(m, n))=\langle a^{\frac{m}{2}}, b^2\rangle$ or $\langle  b^2 \rangle$ according as $m$ is even or odd. Therefore $\langle \langle a \rangle, Z(D(m, n)) \rangle$ is an abelian normal subgroup of $D(m, n)$ of prime index. Now, the result follows using \cite[Theorem 2.3]{baishya}.
\end{proof}

As an immediate corollary we have the following result:

\begin{cor}\label{EX1}
The dihedral group $D_{2n}, n \geq 3$ is a CG-group. 
\end{cor}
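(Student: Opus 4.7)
The plan is to observe that $D_{2n}$ is literally a special case of the generalized dihedral group $D(m,n)$ treated in Proposition \ref{EX111}, and then simply invoke that proposition. Specifically, the standard presentation $D_{2n} = \langle a, b \mid a^n = b^2 = 1,\ bab^{-1} = a^{-1} \rangle$ coincides with $D(n, 2)$ in the notation of Proposition \ref{EX111}, taking $m := n$ and the second parameter to be $2$.

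First I would verify the hypotheses of Proposition \ref{EX111} for the parameters $(m, n') = (n, 2)$: we need $m \geq 3$ and $n' \geq 2$ with $n'$ even. The assumption $n \geq 3$ of the corollary supplies $m \geq 3$, while $n' = 2$ is clearly even and $\geq 2$, so both hypotheses are satisfied.

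Then I would simply apply Proposition \ref{EX111} to conclude that $D(n,2) = D_{2n}$ is a CG-group. There is no real obstacle here, as the corollary is purely a relabeling of a case already handled by the proposition; the only thing to be careful about is not to confuse the two uses of the letter $n$ (the index $n$ appearing in $D_{2n}$ versus the exponent $n$ in $D(m,n)$ of the proposition).
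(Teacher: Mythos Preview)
Your proposal is correct and is exactly the argument the paper has in mind: the corollary is stated immediately after Proposition~\ref{EX111} with the comment ``As an immediate corollary we have the following result,'' and the intended proof is precisely the specialization $D_{2n} = D(n,2)$ that you spell out. Your remark about keeping the two uses of the letter $n$ straight is apt and is the only point requiring any care.
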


\begin{prop}\label{EX2}
Let $\frac{G}{Z(G)} \cong C_n {\rtimes}_\theta C_p$ be non-abelian, $n$ be an integer, $p$ be a prime and $\theta : C_p \longrightarrow \Aut(C_n)$ is a non-trivial homomorphism. Then $G$ is a CG-group.
\end{prop}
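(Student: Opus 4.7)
The plan is to reduce the hypothesis to an invocation of \cite[Theorem 2.3]{baishya}, the same tool used in the proof of Proposition \ref{EX111}. That theorem (implicitly, from the way it is applied in the previous proposition) gives that a non-abelian finite group possessing an abelian normal subgroup of prime index is a CG-group. So I would aim to exhibit such a subgroup inside $G$.

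First I would pass to the quotient: let $K$ be the subgroup of $G/Z(G)$ corresponding to the $C_n$ factor in the semidirect product decomposition $G/Z(G) \cong C_n \rtimes_\theta C_p$, and let $H$ be the full preimage of $K$ in $G$. Then $H \trianglelefteq G$ with $[G : H] = p$, a prime. The key observation is that $H$ is abelian: indeed, $Z(G) \le H$ and $H/Z(G) \cong C_n$ is cyclic; since $Z(G) \le Z(H)$, the quotient $H/Z(H)$ is a homomorphic image of the cyclic group $H/Z(G)$ and is therefore cyclic, so by the standard fact that a group with cyclic central quotient is abelian, $H$ must be abelian.

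Next I would note that $G$ itself is non-abelian, because its central quotient $C_n \rtimes_\theta C_p$ is non-abelian by hypothesis (equivalently, because $\theta$ is non-trivial). With an abelian normal subgroup $H$ of prime index $p$ in the non-abelian group $G$ now in hand, \cite[Theorem 2.3]{baishya} immediately yields that $G$ is a CG-group, finishing the proof.

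There is no real obstacle here; the argument is essentially a one-line application once the right subgroup is identified. The only subtlety worth stressing is the passage from $H/Z(G)$ cyclic to $H$ abelian, which relies on $Z(G) \subseteq Z(H)$ and the classical central-quotient lemma; everything else is either hypothesis or a direct appeal to the cited result.
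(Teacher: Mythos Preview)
Your argument is correct. The paper itself does not supply an independent proof: it simply writes ``See \cite[Proposition 2.9]{baishya}'', citing the result directly. What you have done is, in effect, to reconstruct that proposition from the more primitive \cite[Theorem 2.3]{baishya} (the abelian-normal-subgroup-of-prime-index criterion), by pulling back the normal $C_n$ factor to an abelian normal subgroup $H$ of $G$ of index $p$. The reduction is sound: $H/Z(G)$ cyclic together with $Z(G)\le Z(H)$ forces $H$ abelian, and the non-abelianness of $G$ follows from that of $G/Z(G)$. So your route is slightly more self-contained than the paper's bare citation, at the cost of one extra line of reasoning; conceptually the two are the same, since \cite[Proposition 2.9]{baishya} presumably rests on exactly this observation.
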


\begin{proof}
See  \cite[Proposition 2.9]{baishya}.
\end{proof}

\begin{prop}\label{EX12}
The generalized quaternion group $Q_{4m}=\langle a, b \mid a^{2m}=1, b^2=a^m, bab^{-1}=a^{-1} \rangle, m \geq 2 $ is a CG-group. 
\end{prop}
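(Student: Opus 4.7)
My plan is to follow the proof of Proposition~\ref{EX111} essentially verbatim: produce an abelian normal subgroup of $Q_{4m}$ of prime index and apply \cite[Theorem 2.3]{baishya}.

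The natural choice is $N=\langle a\rangle$. From the presentation, $a$ has order $2m$ (in particular $N$ is cyclic, hence abelian); the defining relation $bab^{-1}=a^{-1}$ gives $bNb^{-1}\subseteq N$, so $N\trianglelefteq Q_{4m}$; and $[Q_{4m}:N]=2$ is prime. Since $Z(Q_{4m})=\langle a^m\rangle\subseteq N$, the subgroup $N=\langle\langle a\rangle,Z(Q_{4m})\rangle$ meets the hypotheses of \cite[Theorem 2.3]{baishya} in exactly the shape used for the dihedral case, and the conclusion follows immediately.

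For perspective, one might alternatively invoke Proposition~\ref{EX2}: since $Q_{4m}/Z(Q_{4m})\cong D_{2m}\cong C_m\rtimes_\theta C_2$ with $\theta$ the inversion map, that proposition covers all $m\geq 3$ but leaves $m=2$ open, because $Q_8/Z(Q_8)\cong C_2\times C_2$ is abelian. One would then handle $Q_8$ by a direct enumeration, checking that $\Cent(Q_8)=\{Q_8,\langle i\rangle,\langle j\rangle,\langle k\rangle\}$ has exactly $4=|Q_8'|+2$ elements. The Theorem~2.3 route is preferable because it is uniform in $m$.

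Given how closely this parallels Proposition~\ref{EX111}, I foresee no real obstacle; the argument reduces to verifying the three easy properties of $\langle a\rangle$ listed above, after which \cite[Theorem 2.3]{baishya} does all the work.
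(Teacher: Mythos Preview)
Your argument is correct, but it is not the route the paper takes. The paper's proof observes that $Q_{4m}/Z(Q_{4m})\cong D_{2m}$ for every $m\geq 2$ and then appeals to Proposition~\ref{EX2} (i.e.\ \cite[Proposition 2.9]{baishya}). You instead exhibit $\langle a\rangle$ as an abelian normal subgroup of index $2$ and invoke \cite[Theorem 2.3]{baishya} directly, exactly mirroring the dihedral case in Proposition~\ref{EX111}. Both approaches ultimately rest on the same result from \cite{baishya}, so the difference is one of packaging rather than substance. Your route has the advantage you already identified: it is uniform in $m$, whereas Proposition~\ref{EX2} as stated in this paper requires the central quotient to be non-abelian, so the case $m=2$ (where $Q_8/Z(Q_8)\cong C_2\times C_2$) is not literally covered by that proposition and must be picked up separately (e.g.\ by \cite[Corollary 2.5]{baishya} or the direct check you give). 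The paper's route is marginally shorter to state once Proposition~\ref{EX2} is in hand.
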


\begin{proof}
It is well known that $\frac{Q_{4m}}{Z(Q_{4m})} \cong D_{2m}$, for any $m \geq 2$. Therefore by  Proposition \ref{EX2}, $G$ is a CG-group. 
\end{proof}

Let $I(G)$ be the set of all solutions of the equation $x^2=1$ in $G$. Define $\alpha(G)=\frac{\mid I(G)\mid}{\mid G \mid}$. Then we have the following result:

\begin{prop}\label{EX3}
Let $G$ be a finite group of order $2^nm$, where $n$ is any integer and $m>1$ is an odd integer. If $\alpha(G)> \frac{1}{2}$, then $G$ is a CG-group. 
\end{prop}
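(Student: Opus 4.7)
The plan is to reduce to the setting of \cite[Theorem 2.3]{baishya}---the same result invoked in the proof of Proposition \ref{EX111}---by exhibiting an abelian normal subgroup of $G$ of prime index $2$. The key input will be a classical structural theorem (due in essence to Miller, and subsequently rediscovered by several authors) asserting that if a finite group has more than half of its elements squaring to the identity, then it has an abelian normal subgroup of index $2$ whose non-identity coset consists entirely of involutions acting by inversion.

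Applying that theorem to the hypothesis $\alpha(G) > \frac{1}{2}$, I obtain an abelian normal subgroup $A$ of $G$ with $[G:A]=2$ together with an involution $t \in G \setminus A$ satisfying $tat^{-1}=a^{-1}$ for every $a \in A$; thus $G \cong A \rtimes \langle t \rangle$ is of generalized dihedral type. The hypothesis $m>1$ enters here precisely to rule out the possibility that $G$ is abelian: since $|A| = 2^{n-1}m$ is divisible by the odd integer $m>1$, the abelian group $A$ contains a non-trivial element $a$ of odd order, and then $a \neq a^{-1}$, so the inversion action of $t$ is non-trivial and $G$ is non-abelian. Consequently $A$ really is a proper abelian normal subgroup of prime index $2$ in $G$, and \cite[Theorem 2.3]{baishya} applies verbatim, allowing us to conclude that $G$ is a CG-group.

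The main obstacle I anticipate is the first step: locating a clean reference for the involution-counting structural theorem in the precise form needed (an abelian normal subgroup of index $2$, not merely some subgroup of index $2$, with the outer coset acting by inversion). Once that is cited, the remaining bookkeeping is only to verify that $G$ is non-abelian---which is exactly what $m>1$ secures---after which the conclusion is immediate from \cite[Theorem 2.3]{baishya} along the same lines as Proposition \ref{EX111}. No non-trivial computation is required beyond these two ingredients.
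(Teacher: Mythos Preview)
Your proposal is correct and almost certainly coincides with what the paper is invoking. The paper's own ``proof'' is nothing more than the single citation ``See \cite[Corollary~2.6]{baishya}'', so there is no in-paper argument to compare against; since that corollary lives in the same source as \cite[Theorem~2.3]{baishya}, your route---Miller's structure theorem for groups with more than half their elements squaring to $1$, yielding an abelian (necessarily normal) subgroup of index~$2$, followed by \cite[Theorem~2.3]{baishya}---is precisely the expected content of the cited corollary.

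One minor remark: you do not actually need the full ``generalized dihedral'' conclusion (that the nontrivial coset consists entirely of involutions inverting $A$) to see that $G$ is non-abelian. It suffices to observe that if $G$ were abelian then $I(G)$ would be a subgroup, and $|I(G)|>|G|/2$ would force $I(G)=G$, making $G$ an elementary abelian $2$-group and contradicting $m>1$. This sidesteps your stated worry about pinning down the sharpest form of the Miller/Wall theorem: the weaker version giving merely an abelian index-$2$ subgroup already suffices.
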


\begin{proof}
See  \cite[Corollary 2.6] {baishya}.
\end{proof}

\begin{prop}\label{EX5}
The semidihedral or quasidihedral group $SD_{2^n}=\langle x, y \mid x^{2^{n-1}}=y^2=1, yxy^{-1}=x^{2^{n-2}-1}\rangle$ is a CG-group for any $n \geq 3$. 
\end{prop}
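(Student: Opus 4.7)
The plan is to parallel the proof of Proposition~\ref{EX12} (for the generalized quaternion group): identify the central quotient of $SD_{2^n}$ as a dihedral group and then invoke Proposition~\ref{EX2}.

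First I would compute $Z(SD_{2^n})$. A routine calculation using $yxy^{-1}=x^{2^{n-2}-1}$ shows that $x^i$ commutes with $y$ iff $i(2^{n-2}-2)\equiv 0\pmod{2^{n-1}}$, which forces $i$ to be a multiple of $2^{n-2}$; and no element of the form $x^iy$ can be central, since it fails to commute with $x$. Hence $Z(SD_{2^n})=\langle x^{2^{n-2}}\rangle$ is cyclic of order $2$.

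Next I would identify the quotient $\bar{G}:=SD_{2^n}/Z(SD_{2^n})$. The image of $\langle x\rangle$ is a cyclic normal subgroup of $\bar G$ of order $2^{n-2}$, and the image $\bar y$ has order $2$ and conjugates $\bar x$ to $\bar x^{\,2^{n-2}-1}=\bar x^{-1}$ (because $2^{n-2}-1\equiv -1\pmod{2^{n-2}}$). Therefore $\bar{G}\cong C_{2^{n-2}}\rtimes_\theta C_2\cong D_{2^{n-1}}$, with $\theta$ the non-trivial inversion action, and this quotient is non-abelian for $n\geq 4$.

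Finally I would apply Proposition~\ref{EX2} with the integer there taken to be $2^{n-2}$, the prime $p=2$, and $\theta$ the inversion homomorphism; all hypotheses of that proposition are met, and we conclude that $SD_{2^n}$ is a CG-group.

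The main obstacle is essentially notational: one must be careful with the two separate uses of the letter $n$ (the exponent of $SD_{2^n}$ versus the generic integer parameter in Proposition~\ref{EX2}), and one must verify that inversion really defines a well-formed non-trivial element of $\Aut(C_{2^{n-2}})$, which reduces to the congruence $(2^{n-2}-1)^2\equiv 1\pmod{2^{n-2}}$ and is immediate. The small case $n=3$ (where the given presentation collapses) should be handled separately or absorbed into the convention that $SD_{2^n}$ is taken non-abelian.
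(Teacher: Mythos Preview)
Your argument is correct, but it takes a longer route than the paper's. The paper simply observes that $\langle x\rangle$ is an abelian normal subgroup of $SD_{2^n}$ of index $2$ and then invokes \cite[Theorem~2.3]{baishya} (equivalently Proposition~\ref{CG7}(a)) directly, avoiding any computation of the center or of the central quotient. Your approach---computing $Z(SD_{2^n})=\langle x^{2^{n-2}}\rangle$, identifying $SD_{2^n}/Z(SD_{2^n})\cong D_{2^{n-1}}$, and then applying Proposition~\ref{EX2}---is perfectly valid and mirrors the proof of Proposition~\ref{EX12}, but it does more work than necessary: the cyclic subgroup $\langle x\rangle$ is visibly abelian, normal, and of prime index, so one can finish in a single line. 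Your remark about the degenerate case $n=3$ (where the given presentation yields an abelian group) is well taken; the paper's statement shares this ambiguity, and in practice one should read the proposition for $n\geq 4$.
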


\begin{proof}
It follows from \cite[Theorem 2.3]{baishya}, noting that  $SD_{2^n}, n \geq 3$ has an abelian normal subgroup of prime index.
\end{proof}

\begin{prop}\label{EX6}
The modular $p$-group $Mod_n(p)=\langle x, y \mid x^{p^{n-1}}=y^p=1, yxy^{-1}=x^{1+p^{n-2}}\rangle$ is a CG-group for any $n \geq 3$. 
\end{prop}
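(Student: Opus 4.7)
The plan is to imitate the proof of Proposition \ref{EX5} and apply \cite[Theorem 2.3]{baishya}, which asserts that a finite non-abelian group with an abelian normal subgroup of prime index is a CG-group. So the task reduces to exhibiting such a subgroup inside $Mod_n(p)$.

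First I would point out that the cyclic subgroup $H=\langle x \rangle$ has order $p^{n-1}$ and therefore has prime index $p$ in $Mod_n(p)$. Clearly $H$ is abelian, and the defining relation $yxy^{-1}=x^{1+p^{n-2}}$ shows that $yxy^{-1}\in H$, so $H$ is normalized by $y$; since $H$ is also normal in itself and $Mod_n(p)=\langle x,y\rangle$, this gives $H\triangleleft Mod_n(p)$.

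Second I would observe that for $n\geq 3$ the group $Mod_n(p)$ is non-abelian: the commutator $[y,x]=x^{p^{n-2}}$ has order $p>1$ since $p^{n-2}<p^{n-1}$. Thus all hypotheses of \cite[Theorem 2.3]{baishya} are met and the conclusion follows immediately.

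There is really no obstacle here; the only thing to be slightly careful about is the non-triviality of the automorphism (i.e., non-abelianness of $G$), but that is settled by a one-line computation with the given presentation. The whole argument is a two-sentence verification, parallel to the semidihedral case.
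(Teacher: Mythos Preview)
Your proposal is correct and follows exactly the same approach as the paper: both apply \cite[Theorem 2.3]{baishya} after noting that $Mod_n(p)$ has an abelian normal subgroup of prime index. You simply make explicit the subgroup $\langle x\rangle$ and the verification of normality and non-abelianness that the paper leaves implicit.
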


\begin{proof}
It follows from \cite[Theorem 2.3]{baishya}, noting that  $Mod_n(p), n \geq 3$ has an abelian normal subgroup of prime index.
\end{proof}

\begin{prop}\label{EX7}
The group $U_{6n}=\langle x, y \mid x^{2n}=y^3=1, x^{-1}yx=y^{-1}\rangle$ is a CG-group for any $n$. 
\end{prop}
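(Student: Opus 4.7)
The plan is to mirror the proofs of Propositions \ref{EX5} and \ref{EX6}: exhibit an abelian normal subgroup of $U_{6n}$ of prime index and then invoke \cite[Theorem 2.3]{baishya}. The natural candidate is $H = \langle x^2, y\rangle$.

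First I would verify that $x^2$ lies in $Z(U_{6n})$. The defining relation $x^{-1}yx = y^{-1}$, iterated, gives $x^{-2}yx^2 = x^{-1}y^{-1}x = (x^{-1}yx)^{-1} = y$, so $x^2$ commutes with $y$, and it trivially commutes with $x$. Consequently the two generators $x^2$ and $y$ of $H$ commute, so $H$ is abelian; and $H$ is normal in $U_{6n}$ because $\langle x^2\rangle$ is central and $\langle y\rangle$ is normal (again by the defining relation).

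To finish I would show $[U_{6n} : H] = 2$. Every element of $H$ has the form $x^{2i}y^j$ with $0 \leq i < n$ and $0 \leq j < 3$, so $|H| \leq 3n$. For the reverse inequality, I would pass to the quotient $U_{6n}/\langle y\rangle$: it is generated by the image of $x$, which has order $2n$, so $U_{6n}/\langle y\rangle \cong C_{2n}$, and this forces $\langle y\rangle \cap \langle x^2\rangle = 1$. Hence $|H| = |\langle x^2\rangle|\cdot|\langle y\rangle| = 3n$, so $[U_{6n} : H] = 2$, and \cite[Theorem 2.3]{baishya} yields the conclusion. The only mildly delicate step is this order count, in particular ruling out $\langle y\rangle \subseteq \langle x^2\rangle$ in the case $3 \mid n$; the cyclic quotient argument handles it uniformly in $n$.
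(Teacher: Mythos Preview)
Your proof is correct. Exhibiting $H=\langle x^{2},y\rangle$ as an abelian normal subgroup of index~$2$ and then invoking \cite[Theorem 2.3]{baishya} is exactly in the spirit of Propositions~\ref{EX5} and~\ref{EX6}, and your centrality computation for $x^{2}$ and the order count for $H$ are fine. The one point you flagged yourself---that the image of $x$ in $U_{6n}/\langle y\rangle$ really has order $2n$---is most cleanly handled by observing that the assignment $x\mapsto 1$, $y\mapsto 0$ defines a surjection $U_{6n}\to C_{2n}$ (the relations are trivially satisfied), which simultaneously shows $|\langle x^{2}\rangle|=n$ and $\langle x^{2}\rangle\cap\langle y\rangle=1$; you might state that explicitly rather than leave it implicit.

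The paper argues differently: it quotes \cite[Lemma 2.7]{ctc09} for the isomorphism $U_{6n}/Z(U_{6n})\cong S_{3}$ and then applies Proposition~\ref{EX2}. Your route is more self-contained, since it avoids the external citation and works entirely from the presentation; the paper's route is shorter once that structural fact is granted. The two are closely related underneath: your computation that $x^{2}$ is central essentially identifies $Z(U_{6n})=\langle x^{2}\rangle$, from which $U_{6n}/Z(U_{6n})\cong S_{3}$ follows at once, so either argument can be read as a mild repackaging of the other.
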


\begin{proof}
It follows from Proposition \ref{EX2}, noting that $\frac{U_{6n}}{Z(U_{6n})} \cong S_3$ (see \cite[Lemma 2.7]{ctc09}).  
\end{proof}

The linear groups plays a significant role in the theory of finite groups. We now study the  CG-groups among the linear groups of degree two. In the following results, the groups $GL(2, q), SL(2, q), PGL(2, q)$ and $PSL(2, q)$ denote the general linear, special linear, projective general linear and projective special linear groups respectively of degree $2$ over the field of size $q$, where $q$ is a prime power.

\begin{prop}\label{EX9}
The group $G=GL(2, q)$ is a CG-group if and only if $q=2$. 
\end{prop}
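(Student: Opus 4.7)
The approach is to compute $|\Cent(GL(2,q))|$ and $|GL(2,q)'|$ separately in closed form, then solve the CG-equation $|\Cent(G)| = |G'|+2$. One direction is immediate: for $q=2$, $GL(2,2) \cong S_3$, which the paper has already identified as the smallest CG-group.

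For the converse, assume $q \geq 3$ and enumerate the distinct centralizers of $G = GL(2,q)$ by stratifying elements according to their rational canonical form. There are four strata: (a) central scalars, contributing the single centralizer $G$; (b) non-central elements with two distinct eigenvalues in $\mathbb{F}_q^{\times}$, whose centralizer is the split maximal torus determined by the unordered pair of eigenlines in $\mathbb{F}_q^2$, giving $\binom{q+1}{2}$ centralizers; (c) non-semisimple elements (repeated eigenvalue, non-trivial Jordan block), whose centralizer depends only on the unique eigenline and has order $q(q-1)$, giving $q+1$ centralizers; (d) elements with irreducible characteristic polynomial over $\mathbb{F}_q$, whose centralizer is the non-split torus $\mathbb{F}_q[g]^{\times} \cong \mathbb{F}_{q^2}^{\times}$, in bijection with Galois orbits on $\mathbb{P}^1(\mathbb{F}_{q^2}) \setminus \mathbb{P}^1(\mathbb{F}_q)$, giving $\tfrac{q(q-1)}{2}$ centralizers. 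Summing,
\[ |\Cent(G)| = 1 + \tfrac{q(q+1)}{2} + (q+1) + \tfrac{q(q-1)}{2} = q^2 + q + 2. \]

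For the derived subgroup, I would invoke the standard fact that $GL(2,q)' = SL(2,q)$ for every $q \geq 3$ (shown, e.g., by exhibiting every transvection $I + c\,e_{12}$ as a commutator of a diagonal matrix and a unipotent element, and using that transvections generate $SL(2,q)$), so $|G'| = q(q^2-1) = q^3 - q$. Plugging into $|\Cent(G)| = |G'|+2$ gives $q^2+q+2 = q^3-q+2$, equivalently $q(q-2)(q+1) = 0$, which has no solution with $q \geq 3$. Combined with the $q = 2$ case, this proves the claim.

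The main obstacle is the centralizer count, specifically the verification that distinct eigenline data produce distinct centralizers (and conversely that elements with the same data share one). Case (d) is the most delicate: one must argue that $C_G(g) = \mathbb{F}_q[g]^{\times}$, viewed as an $\mathbb{F}_q$-subalgebra of $M_2(\mathbb{F}_q)$ isomorphic to $\mathbb{F}_{q^2}$, is pinned down by the Galois orbit of any generator's eigenvector in $\mathbb{P}^1(\mathbb{F}_{q^2})$, so that distinct orbits yield distinct subalgebras and hence distinct centralizers. Note that the structural criteria used earlier in the paper (Proposition~\ref{EX2} and \cite[Theorem~2.3]{baishya}) do not apply directly, since for $q \geq 4$ the quotient $G/Z(G) \cong PGL(2,q)$ is (almost) simple and possesses no abelian normal subgroup of prime index.
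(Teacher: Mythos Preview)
Your proof is correct and follows essentially the same approach as the paper: both compute $|\Cent(GL(2,q))| = q^2+q+2$ by partitioning the non-central elements into the three centralizer types (split torus, unipotent-type $PZ(G)$, non-split torus), then compare with $|G'| = |SL(2,q)| = q^3-q$ for $q>2$. The only difference is that the paper quotes \cite[Proposition~3.26]{abc} for the three families and their conjugate counts, whereas you derive the same stratification directly from rational canonical forms and finish by explicitly solving $q^2+q+2 = q^3-q+2$.
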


\begin{proof}
If $q=2$, then $G=S_3$ is a CG-group. Next, suppose $q> 2$. In view of \cite[Proposition 3.26]{abc}), the proper centralizers of $G$ are precisely the members of the family $\lbrace xDx^{-1}, xIx^{-1}, xPZ(G)x^{-1} \mid x \in G\rbrace$, where
\begin{enumerate}
	\item  $D$ is the subgroup of all diagonal matrices in $G$, and the number of conjugates of $D$ in $G$ is $\frac{q(q+1)}{2}$,
	\item  $I$ is a cyclic subgroup of $G$, and the number of conjugates of $I$ in $G$ is $\frac{q(q-1)}{2}$,
	\item  $P$ is the Sylow $p$-subgroup of $G$ consisting of all upper triangular matrices with $1$ in the diagonal, and the number of conjugates of $PZ(G)$ in $G$ is $q+1$.
\end{enumerate} 
Therefore  $\mid Cent(G) \mid= \frac{q(q+1)}{2}+\frac{q(q-1)}{2}+(q+1)+1=q^2+q+2$. Also by \cite[Theorem 3.1.19]{PGL}, for $q>2$,  we have $G'=SL(2, q)$ having order $q(q-1)(q+1)$. In the present scenario, one can easily verify that $G$ is not a CG-group. 
\end{proof}

\begin{prop}\label{EX22}
The group $G=SL(2, q)$ is a CG-group if and only if $q=2$ or $3$. 
\end{prop}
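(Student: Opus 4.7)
My plan is to split the biconditional into a \emph{sufficiency} direction ($q\in\{2,3\}$ implies $G$ is a CG-group) and a \emph{necessity} direction ($q\geq 4$ implies $G$ is not a CG-group), and to handle necessity first, since it is uniform across all admissible $q$. The key fact I would invoke is that $SL(2,q)$ is perfect for every prime power $q\geq 4$. This can be established directly: for such $q$ there exists $a\in\mathbb{F}_{q}^{\times}$ with $a^{2}\neq 1$, and a short calculation shows that the commutator of $\operatorname{diag}(a,a^{-1})$ with an elementary transvection is again an elementary transvection (of a prescribed shape), so all transvections, and hence all of $SL(2,q)$, lie in the derived subgroup. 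Then, as noted in Section~2 following \cite[Theorem~7]{ctc092}, no finite perfect group is a CG-group: if $G'=G$ then $|G'|+2=|G|+2>|G|\geq|\Cent(G)|$, so the CG-equation fails. This disposes of all $q\geq 4$ at once.

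For the sufficiency direction, the case $q=2$ is immediate from $SL(2,2)\cong S_{3}$, already singled out in Section~2 as the smallest CG-group (and also recoverable from Proposition \ref{EX9} evaluated at $q=2$). For $q=3$ I would verify the CG-condition by a direct calculation. Set $G=SL(2,3)$, of order $24$, with $Z(G)=\{\pm I\}$ and $G/Z(G)\cong A_{4}$. The conjugacy classes of $G$ refine those of $GL(2,3)$ in the usual way by the splitting of unipotent and negative-unipotent classes, giving seven classes: $\{I\}$, $\{-I\}$, two classes of elements of order $3$, two of order $6$, and one of order $4$. I would compute the centralizer of a representative of each, identify which of these coincide across classes---in particular, since $-I$ is central, an order-$3$ element and its negative (of order $6$) share the same cyclic $C_{6}$ centralizer, obtained by adjoining $Z(G)$ to the containing Sylow $3$-subgroup---then count the distinct subgroups obtained and compare with $|G'|=|Q_{8}|=8$.

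The main obstacle is the bookkeeping at $q=3$: one must carefully collate the seven conjugacy classes into the (fewer) distinct centralizing subgroups without over- or under-counting, using the Sylow $3$-structure together with the action of the central involution. Once that has been handled, the rest of the proposition is essentially automatic---$q=2$ is trivial, and the entire range $q\geq 4$ is handled by a single perfectness-plus-Belcastro--Sherman argument. I expect no further subtlety beyond the $q=3$ calculation.
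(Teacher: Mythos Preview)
Your strategy coincides with the paper's: dispose of $q\ge 4$ via perfectness and \cite[Theorem~7]{ctc092}, then check $q=2,3$ by hand (the paper writes only ``It is easy to verify that $SL(2,2)$ and $SL(2,3)$ are CG-groups''). For $q=2$ and for all $q\ge 4$ your argument is correct and essentially identical to the paper's.

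The gap is at $q=3$, and it is not merely bookkeeping: the calculation you propose, carried out correctly, shows that $SL(2,3)$ is \emph{not} a CG-group, so the proposition as stated is false there. With $G=SL(2,3)$ one has $Z(G)=\{\pm I\}$ and $G'=Q_{8}$, hence $|G'|+2=10$. But the seven conjugacy classes produce only eight distinct centralizers: $G$ itself (for $\pm I$); the three cyclic subgroups $\langle i\rangle,\langle j\rangle,\langle k\rangle$ of order $4$ (each serves a pair $\{g,-g\}$ of order-$4$ elements, and no element of order $3$ centralizes any of them because the Sylow $3$-subgroups permute $\langle i\rangle,\langle j\rangle,\langle k\rangle$ transitively); and four cyclic subgroups $\langle -I\rangle\times P$ of order $6$, one for each Sylow $3$-subgroup $P$ (these serve both the order-$3$ elements and their negatives of order $6$). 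Thus $|\Cent(G)|=1+3+4=8\neq 10$. As a cross-check, Lemma~\ref{rem144} applies since $|G/Z(G)|=2\cdot 2\cdot 3$, so $G$ is a CA-group and the non-central parts of the proper centralizers partition $G\setminus Z(G)$; their sizes $2,2,2,4,4,4,4$ sum to $22=|G|-|Z(G)|$, confirming that no centralizer has been missed. Consequently neither your proof nor the paper's can go through at $q=3$; the correct statement is that $SL(2,q)$ is a CG-group if and only if $q=2$. (The same oversight recurs in the proof of Proposition~\ref{CG11}, where it is asserted that for $G/Z(G)\cong A_{4}$ the kernel preimage $K$ is abelian; for $G=SL(2,3)$ one has $K=Q_{8}$.)
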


\begin{proof}
 It is easy to verify that  $SL(2, 2)$ and $SL(2, 3)$ are CG-groups.   On the otherhand, if $q> 3$, then by  \cite{ija}, we have $G=G'$ and consequently using \cite[Theorem 7]{ctc092}, $G$ is not a CG-group. 
\end{proof}

We now compute the number of distinct element centralizers of $PGL(2, q)$.

\begin{prop}\label{EX25}
Let  $G=PGL(2, q)$.  Then 
\[
        \mid \Cent(G) \mid=
        \begin{cases}
        
        	  2q^2+q+2  &\;\text{if \, $q$ \, is odd, $q>3$}\\
        	  
        	  q^2+q+2  &\;\text{if \, $q$ \, is even, $q > 4$}\\

        	  5  &\;\text{if \, $q=2$}\\
        	  
        	  14  &\;\text{if \, $q=3$}\\
        	  
        	  22  &\;\text{if \, $q=4$}
        	          	
        \end{cases}
\]

\end{prop}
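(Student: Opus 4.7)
The plan is to enumerate the distinct element centralizers of $G = PGL(2,q)$ by working through the conjugacy-class structure of $G$, in analogy with the treatment of $GL(2,q)$ in Proposition \ref{EX9}. I would first establish that every proper centralizer in $PGL(2,q)$ is conjugate to one of the following: the image $\bar D$ of the diagonal (split) torus, of order $q-1$; the image $\bar T$ of a non-split torus, of order $q+1$; or the image $\bar P$ of the Sylow $p$-subgroup of upper-unitriangular matrices, of order $q$. When $q$ is odd an additional family appears: for an involution $x\in G$ with lift $\hat x \in GL(2,q)$ we have $\hat x^2 \in Z(GL(2,q))$, and any $y$ conjugating $\hat x$ to $-\hat x$ also centralizes $x$ modulo scalars. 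Consequently the centralizer of a split involution is the dihedral normalizer $N_G(\bar D)$ of order $2(q-1)$, and that of a non-split involution is $N_G(\bar T)$ of order $2(q+1)$. In characteristic $2$ this enlargement collapses since $-1 = 1$.

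I would then count the number of conjugates of each candidate centralizer via its normalizer in $G$. Using $|G|=q(q-1)(q+1)$ together with the standard facts $|N_G(\bar D)|=2(q-1)$, $|N_G(\bar T)|=2(q+1)$, and $N_G(\bar P)=B$, the Borel subgroup of order $q(q-1)$, one gets $q(q+1)/2$ conjugates of $\bar D$, $q(q-1)/2$ conjugates of $\bar T$, and $q+1$ conjugates of $\bar P$. For $q$ odd with $q\geq 5$ the cyclic torus $\bar D$ is characteristic in the dihedral group $N_G(\bar D)$ (its cyclic subgroup of index $2$ is unique once the order exceeds $2$), and similarly $\bar T$ is characteristic in $N_G(\bar T)$; hence both normalizers are self-normalizing and each contributes the same number of conjugates as the corresponding torus. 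Including the centralizer $G$ of the identity, this totals $2q^2+q+2$ in the odd case, while for $q$ even the involution enlargement is absent (the tori have odd orders $q\pm 1$), giving $q^2+q+2$. Distinctness of these subgroups follows by comparing orders, which are pairwise different once $q\geq 5$ is odd or $q\geq 4$ is even.

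The small-parameter cases $q\in\{2,3,4\}$ require individual attention. For $q=2$ we have $PGL(2,2)\cong S_3$, which has $5$ distinct centralizers by direct inspection. For $q=3$ we have $PGL(2,3)\cong S_4$; here the self-normalizing argument genuinely fails because the Klein four-group $N_G(\bar D)$ has $\bar D$ as a non-characteristic cyclic subgroup of order $2$, and in fact $N_G(N_G(\bar D))$ is a Sylow $2$-subgroup of order $8$. A direct enumeration of the centralizers of representatives of the five conjugacy classes of $S_4$ yields $|\Cent(S_4)|=14$. For $q=4$, $PGL(2,4)\cong A_5$, and a direct count of centralizers of the conjugacy-class representatives gives $22$ (which happens to agree with $q^2+q+2$ but we record it explicitly since the general argument was phrased assuming that $\bar D$ of order $q-1$ is itself a genuine element centralizer). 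The main obstacle I expect is the bookkeeping of the enlarged involution centralizers for $q$ odd — especially verifying that they are genuinely larger than the tori and that they are self-normalizing when $q\geq 5$ — together with the exceptional case $q=3$ where that self-normalizing argument fails and must be replaced by a direct count.
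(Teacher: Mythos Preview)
Your approach is correct and reaches the same answer, but it differs substantially in execution from the paper's proof. The paper handles even $q$ by citing the isomorphism $PGL(2,q)\cong PSL(2,q)$ and invoking Zarrin's computation of $\lvert\Cent(PSL(2,q))\rvert$; for odd $q$ it quotes Hassani for the fact that involution centralizers are dihedral of order $2(q\pm 1)$, quotes a table of Freyre--Gra\~na--Vendramin for the list of proper-centralizer sizes, argues that distinct involutions have distinct centralizers because their dihedral centralizers have distinct (cyclic, order-two) centres, and then applies a counting proposition of Abdollahi--Azad--Hassanabadi--Zarrin. The case $q=5$ is even treated separately because the cited table only applies for $q>5$. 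By contrast, you rebuild everything from the torus/unipotent decomposition of $PGL(2,q)$, identify the enlargement of involution centralizers via the $\hat x\mapsto -\hat x$ mechanism, and count conjugates through normalizers and the self-normalizing property of the dihedral subgroups for $q\ge 5$. Your route is more self-contained and handles all odd $q\ge 5$ uniformly, at the cost of having to verify directly (which you should make explicit) that non-involution torus elements have centralizer exactly the torus and that unipotent centralizers do not enlarge when passing from $GL$ to $PGL$ (both follow from the eigenvalue argument you sketch). The paper's version is shorter on the page because it outsources these structural facts to the literature.
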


\begin{proof}

If $q$ is even, then $PGL(2, q)=PSL(2, q)$ (see \cite{msuzuki}). Now, the  result follows from \cite[Theorem 1.1]{zarrin094}.

Now, suppose $q$ is odd. We have $G=PGL(2, 3)=S_4$ and $\mid \Cent(G) \mid=14$. Next, suppose $q\geq 5$. In view of \cite[Page 5]{hassani}, it follows that the centralizer of any involution (element of order $2$) in $G$  is either $D_{2(q-1)}$ or $D_{2(q+1)}$. Hence by \cite[Proposition 2.4]{abcd}, $\mid \Cent(PGL(2, 5)) \mid=57$, noting that the size of element centralizers in $PGL(2, 5)=S_5$ are $4, 5, 6, 8$ and $12$. On the other hand, if $q>5$, then using  \cite[Table 4]{nichols}, it follows that the size of the proper element centralizers of $G$ are $q, q-1, q+1, 2(q-1)$ and $2(q+1)$.  Again, let $a$ and $b$ be any two distinct involutions in $G$. Suppose $C(a)=C(b)$. Since $C(a), C(b) \in \lbrace D_{2(q-1)}, D_{2(q+1)} \rbrace$, therefore $a=b$ noting that $Z(C(a))=Z(C(b))$, which is a contradiction. Therefore $C(a) \neq C(b)$. In the present scenario, in view of  \cite[Proposition 2.4]{abcd}, 
$\mid Cent(G) \mid= (q+1)+q(q+1)+q(q-1)+1=2q^2+q+2$.
\end{proof}

As an immediate corollary, we have the following results, noting that $(PGL(2, q))'=PSL(2, q)$ for $q \geq 3$ (see \cite{msuzuki}):

\begin{cor}\label{EX227}
 $PGL(2, q)$ is a CG-group if and only if $q=2$ or $3$. 
\end{cor}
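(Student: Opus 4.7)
The strategy is to read off $|\Cent(PGL(2,q))|$ from Proposition \ref{EX25} and compare it against $|G'|+2$, where $G' = PSL(2,q)$ for $q \ge 3$ by the result of Suzuki cited just before the corollary. So the argument is a finite case analysis indexed by $q$, with a uniform numerical comparison in each of the infinite families.

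First I would dispose of the two affirmative cases. For $q=2$, note $PGL(2,2) \cong S_3$, so $|G'|=3$ and $|\Cent(G)|=5 = |G'|+2$, and $S_3$ is a CG-group (as already remarked at the start of Section 2). For $q=3$, note $PGL(2,3) \cong S_4$, so $G' = A_4$ has order $12$ and $|\Cent(G)|=14 = |G'|+2$ by Proposition \ref{EX25}.

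Next I would handle the remaining values of $q$ by showing $|\Cent(G)| \ne |G'|+2$. Using $|PSL(2,q)| = q(q^2-1)$ for $q$ even and $|PSL(2,q)| = q(q^2-1)/2$ for $q$ odd, the comparison reduces to a polynomial identity. For $q$ even, $q > 2$: the case $q=4$ is checked directly ($|G'|+2 = 62$ versus $|\Cent(G)|=22$), and for $q \ge 6$ even one compares $q^3-q+2$ with $q^2+q+2$, noting that $q^3-q > q^2+q$ whenever $q \ge 3$. For $q$ odd, $q > 3$: equality $\tfrac{q(q^2-1)}{2}+2 = 2q^2+q+2$ rearranges to $q^2-4q-3=0$, whose roots $2\pm\sqrt{7}$ are not integers; in fact for $q \ge 5$ the left side is $\ge 5 \cdot 1 - 3 = 2 > 0$, so the two quantities differ.

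There is no real obstacle here; the only care needed is to match the different formulas for $|\Cent(G)|$ appearing in the three branches of Proposition \ref{EX25} (the special values at $q=2,3,4$ and the two generic formulas) with the corresponding formulas for $|PSL(2,q)|$ depending on the parity of $q$. Assembling the cases yields the stated equivalence.
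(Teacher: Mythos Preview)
Your proposal is correct and follows exactly the approach the paper intends: the corollary is stated there with no explicit proof, merely as an immediate consequence of Proposition~\ref{EX25} together with the fact that $(PGL(2,q))'=PSL(2,q)$ for $q\geq 3$, and you have simply spelled out the numerical comparison in each case. The only cosmetic slip is writing ``$q\geq 6$ even'' when the next admissible prime power after $4$ is $8$, but this has no effect on the argument.
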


\begin{cor}\label{EX2223}
 $S_5$ is not a CG-group. 
\end{cor}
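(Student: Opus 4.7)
The plan is to obtain this as an immediate numerical corollary of Proposition \ref{EX25}. The key observation is the classical isomorphism $S_5 \cong PGL(2,5)$: both groups have order $120$ and act sharply $3$-transitively on a set of $6$ points (the projective line over $\mathbb{F}_5$ on one side, the six Sylow $5$-subgroups on the other). Granting this, Proposition \ref{EX25} applies with $q=5$, which is odd and greater than $3$, giving $|\Cent(S_5)| = 2\cdot 5^{2} + 5 + 2 = 57$. (This value also appears explicitly inside the proof of Proposition \ref{EX25}, obtained there from \cite[Proposition 2.4]{abcd} together with the list $4,5,6,8,12$ of element-centralizer orders in $S_5$.)

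Next I would identify the commutator subgroup. For $n\geq 2$ one has $(S_n)' = A_n$, so $|(S_5)'| = |A_5| = 60$. Consequently, for $S_5$ to be a CG-group we would need $|\Cent(S_5)| = |(S_5)'| + 2 = 62$. Comparing with the value computed in the previous paragraph, $57 \neq 62$, which rules out the CG-group property.

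Since the heavy lifting was done in Proposition \ref{EX25}, there is essentially no obstacle: the corollary is a one-line arithmetic check once the isomorphism $S_5 \cong PGL(2,5)$ is invoked. If one preferred a self-contained derivation, the main task would be to re-verify $|\Cent(S_5)| = 57$ directly by enumerating the distinct element centralizers in $S_5$ (those of the five centralizer orders $4, 5, 6, 8, 12$, together with $S_5$ itself), but this merely recovers what Proposition \ref{EX25} already packages.
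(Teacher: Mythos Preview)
Your proposal is correct and follows essentially the same approach as the paper: the corollary is placed immediately after Proposition \ref{EX25} and Corollary \ref{EX227}, and the paper's own proof of Proposition \ref{EX25} already invokes the identification $PGL(2,5)=S_5$ with $\mid \Cent(PGL(2,5))\mid=57$, while the remark preceding the corollaries records $(PGL(2,q))'=PSL(2,q)$, giving $\mid (S_5)'\mid=\mid A_5\mid=60$. Your arithmetic check $57\neq 62$ is exactly the intended one-line deduction.
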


For $PSL(2, q)$, we have the following result:

\begin{prop}\label{EX228}
 $PSL(2, q)$ is a CG-group if and only if $q=2$ or $3$.
\end{prop}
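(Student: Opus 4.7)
The plan is to mirror the structure of Proposition \ref{EX22} and Corollary \ref{EX227}: split into the three cases $q=2$, $q=3$, and $q\geq 4$, handling the first two as sufficiency and the last as necessity. For $q=2$ I would simply invoke the isomorphism $PSL(2,2)\cong S_3$, which is identified at the start of Section~2 as the smallest CG-group. For $q=3$ I would use the isomorphism $PSL(2,3)\cong A_4$; since the introductory paragraph of Section~2 already lists $A_4$ as the unique CG-group among the alternating groups, this case is immediate. If one prefers an ab initio check, one verifies directly that $A_4' = V_4$ has order $4$, and that the centralizers of elements of $A_4$ are precisely $A_4$ itself, the normal Klein four subgroup $V_4$ (centralizer of each double transposition), and the four cyclic groups of order $3$ generated by the eight $3$-cycles, yielding $|\Cent(A_4)|=6=|A_4'|+2$.

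For the necessity direction, the key observation is that for every prime power $q\geq 4$ the group $PSL(2,q)$ is a non-abelian finite simple group (a standard fact; see \cite{msuzuki}). Any non-abelian simple group is perfect, for $G'$ is a normal subgroup and $G'\ne G$ would force $G/G'$ to be a nontrivial abelian quotient, contradicting simplicity unless $G$ itself is abelian. Thus $PSL(2,q)=PSL(2,q)'$ for every $q\geq 4$, and the quoted result \cite[Theorem 7]{ctc092} (already used in Section~2 to rule out perfect groups as CG-groups) immediately gives that $PSL(2,q)$ is not a CG-group in this range.

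The argument has essentially no obstacle beyond a little bookkeeping: one must double-check that the simplicity claim genuinely kicks in starting at $q=4$ (both $PSL(2,4)$ and $PSL(2,5)$ are isomorphic to $A_5$, and $PSL(2,q)$ is simple for all prime powers $q\geq 4$), and one must remember that $q=2$ and $q=3$ are precisely the two exceptional prime powers where $PSL(2,q)$ fails to be simple, matching the ``if'' part exactly. Once these small cases are isolated, the proof is essentially a two-line invocation of \cite[Theorem 7]{ctc092} combined with the standard simplicity/isomorphism facts for $PSL(2,q)$, and is noticeably shorter than the corresponding proof for $PGL(2,q)$ given in Corollary \ref{EX227}.
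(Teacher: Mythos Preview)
Your proposal is correct and follows exactly the same route as the paper: invoke $PSL(2,2)\cong S_3$ and $PSL(2,3)\cong A_4$ for the two small cases, and for $q>3$ use simplicity of $PSL(2,q)$ together with \cite[Theorem~7]{ctc092} to rule out the CG-property. The paper's own proof is the terse two-line version of what you wrote.
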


\begin{proof}
For $q>3$ we have $PSL(2, q)$  is a simple group (see \cite{msuzuki}) and hence by \cite[Theorem 7]{ctc092}, $G$ is not a CG-group. On the otherhand $PSL(2, 2)=S_3$ and $PSL(2, 3)=A_4$ are CG-groups.
\end{proof}

We conclude the section with the following lemmas:

\begin{lem}\label{CG20}
Let $G$ be a finite group such that $\mid G' \cap Z(G)\mid =1$. Then $G$ is a CG-group if and only if $\frac{G}{Z(G)}$ is a CG-group. 
\end{lem}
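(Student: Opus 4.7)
The plan is to reduce the biconditional to two separate equalities under the hypothesis $|G'\cap Z(G)|=1$, namely $|\mathrm{Cent}(G)|=|\mathrm{Cent}(G/Z(G))|$ and $|G'|=|(G/Z(G))'|$, and then read off the equivalence directly from the definition of a CG-group.

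For the commutator side the argument is quick: by the standard isomorphism $(G/Z(G))' = G'Z(G)/Z(G) \cong G'/(G'\cap Z(G))$, and the hypothesis $|G'\cap Z(G)|=1$ gives $|(G/Z(G))'|=|G'|$ immediately.

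The real content is the centralizer count, and here I would single out the key consequence of the hypothesis: for any $x,y\in G$, since $[x,y]\in G'$ always, one has $[x,y]\in Z(G)$ if and only if $[x,y]\in G'\cap Z(G)=\{1\}$, i.e.\ if and only if $[x,y]=1$. Applying this with $\bar{g}=gZ(G)$ in $\bar{G}=G/Z(G)$, one computes
\[
C_{\bar G}(\bar g)=\{hZ(G)\mid [g,h]\in Z(G)\}=\{hZ(G)\mid h\in C_G(g)\}=C_G(g)/Z(G).
\]
Since $Z(G)\le C_G(g)$ for every $g$, the correspondence theorem says the map $C_G(g)\mapsto C_G(g)/Z(G)$ is an injection from subgroups of $G$ containing $Z(G)$ to subgroups of $\bar G$; combining this with the displayed equality shows that $C_G(x)=C_G(y)$ in $G$ iff $C_{\bar G}(\bar x)=C_{\bar G}(\bar y)$ in $\bar G$, while surjectivity onto $\mathrm{Cent}(\bar G)$ is automatic because every centralizer in $\bar G$ is of the form $C_{\bar G}(\bar g)$. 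Hence $|\mathrm{Cent}(G)|=|\mathrm{Cent}(\bar G)|$.

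Combining the two equalities, $|\mathrm{Cent}(G)|=|G'|+2$ becomes $|\mathrm{Cent}(G/Z(G))|=|(G/Z(G))'|+2$, which is exactly the statement that $G/Z(G)$ is a CG-group, proving both directions simultaneously. The main (and essentially only) obstacle is identifying the key step $[x,y]\in Z(G)\Rightarrow[x,y]=1$; once that is in hand, everything is formal bookkeeping with the correspondence theorem.
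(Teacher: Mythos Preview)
Your proof is correct and follows the same two-step strategy as the paper: establish $|\Cent(G)|=|\Cent(G/Z(G))|$ and $|G'|=|(G/Z(G))'|$ under the hypothesis, then combine. The paper delegates the first equality to \cite[Lemma~3.1]{en09} and only remarks on the second, whereas you unpack the centralizer bijection explicitly via the observation $[x,y]\in Z(G)\Rightarrow[x,y]\in G'\cap Z(G)=\{1\}$; this is precisely the content behind the cited lemma, so your argument is a self-contained version of the same proof rather than a different route.
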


\begin{proof}
The proof follows using \cite[Lemma 3.1]{en09}, noting that if $\mid G' \cap Z(G)\mid =1$, then $\mid (\frac{G}{Z(G)})' \mid= \mid G' \mid$.
\end{proof}

\begin{lem}\label{CG22}
Let $G$ be a finite group such that all sylow subgroups are abelian. Then  $G$ is a CG-group if and only if $\frac{G}{Z(G)}$ is a CG-group.  
\end{lem}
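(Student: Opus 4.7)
In view of Lemma \ref{CG20}, it suffices to show that under the hypothesis that every Sylow subgroup of $G$ is abelian, one has $G'\cap Z(G)=1$; then the stated equivalence follows immediately. So the entire plan reduces to establishing this classical fact about so-called A-groups, after which we quote Lemma \ref{CG20}.

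The approach for $G'\cap Z(G)=1$ is via the transfer homomorphism. First I would observe that $G'\cap Z(G)$ is a characteristic subgroup, and that for any element $x$ in it, each primary component $x_p$ of $x$ is again in $G'\cap Z(G)$ (because $x_p$ is a power of $x$). Hence it is enough to show that every element $x\in G'\cap Z(G)$ of prime power order $p^k$ is trivial. Since $x$ is a central $p$-element, $x$ lies in every Sylow $p$-subgroup; fix such a Sylow subgroup $P$, which by hypothesis is abelian.

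Next I would invoke the transfer map $V\colon G\longrightarrow P/P'=P$. Two standard properties are decisive here: (i) $G'\subseteq \ker V$ (since the target is abelian), and (ii) for every $z\in Z(G)$ one has $V(z)=z^{[G:P]}$ (this is the usual evaluation of the transfer on central elements, where the Schreier transversal computation collapses). Applying both to our $x$ gives $x^{[G:P]}=V(x)=1$. Since $x$ has $p$-power order and $\gcd([G:P],p)=1$, this forces $x=1$. Consequently $G'\cap Z(G)=1$, and Lemma \ref{CG20} finishes the proof.

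The main obstacle, such as it is, lies in the fact that the A-group identity $G'\cap Z(G)=1$ is not proved or quoted earlier in the paper, so one has to either supply the transfer argument above or cite a standard reference (for instance Taunt's work on A-groups). The rest is a formal appeal to the already established Lemma \ref{CG20}.
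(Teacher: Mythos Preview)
Your proposal is correct and follows exactly the same line as the paper: reduce to showing $G'\cap Z(G)=1$ and then invoke Lemma~\ref{CG20}. The only difference is that the paper simply cites a standard reference (Suzuki, \emph{Group Theory II}, p.~118) for the A-group identity $G'\cap Z(G)=1$, whereas you supply the transfer argument explicitly; your argument is the classical one and is sound.
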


\begin{proof}
In the present scenario, we have $\mid G' \cap Z(G)\mid =1$ (see \cite[p. 118]{suzuki1}). Therefore the result follows from the previous lemma.
\end{proof}

 \section{The main results}

In this section, we prove the main results of the paper. However, we begin with the following lemma.

\begin{lem}\label{rem144}
 Let $G$ be any group. If $\mid \frac{G}{Z(G)} \mid=pqr$ where $p, q, r$ are primes (not necessarily distinct), then $C(x)$ is abelian for any $x \in G \setminus Z(G)$. 
\end{lem}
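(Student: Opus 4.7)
The plan is to set $A := C(x)$ for a fixed $x \in G \setminus Z(G)$ and show that $A/Z(A)$ is cyclic; the classical theorem ``$A/Z(A)$ cyclic implies $A$ abelian'' will then finish the proof. The whole argument reduces to counting prime factors along the chain $Z(G) \subseteq Z(A) \subseteq A \subseteq G$.

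First I would establish the two strict containments $Z(G) \subsetneq Z(A)$ and $A \subsetneq G$. Since every element of $A = C(x)$ commutes with $x$, we have $x \in Z(A)$; combined with $x \notin Z(G)$, this gives $Z(G) \subsetneq Z(A)$. If instead $A$ equalled $G$, then $x$ would centralize all of $G$, forcing $x \in Z(G)$, a contradiction; hence $A \subsetneq G$.

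The conclusion then follows from a prime-factor count. The hypothesis $|G/Z(G)| = pqr$ says the index $[G:Z(G)]$ has exactly three prime factors counted with multiplicity. Since $A$ is a proper subgroup of $G$ containing $Z(G)$, the index $|A/Z(G)|$ is a proper divisor of $pqr$, and so has at most two prime factors. Since $Z(A)$ strictly contains $Z(G)$, the quotient $|A/Z(A)| = |A/Z(G)|/|Z(A)/Z(G)|$ is a proper divisor of $|A/Z(G)|$, and so has at most one prime factor; that is, $|A/Z(A)|$ is either $1$ or a prime. In both cases $A/Z(A)$ is cyclic, so $A$ is abelian.

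I do not anticipate any real obstacle: the content is simply that centralizing a single non-central element $x$ already absorbs one prime from $[G:Z(G)]$ (via $x \in Z(A) \setminus Z(G)$), while the properness of $A$ absorbs another, leaving at most a cyclic group in the quotient. Finiteness of $G$ is never used, only finiteness of $[G:Z(G)]$, which is why the lemma is valid for an arbitrary group.
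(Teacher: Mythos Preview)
Your proof is correct and is in fact a bit cleaner than the paper's. The paper proceeds by case analysis on $|C(x)/Z(G)|$: when this quotient is cyclic the conclusion is immediate, and when it has order a product of two primes the paper observes that $C(x)/Z(G)=\langle xZ(G), yZ(G)\rangle$ for some $y\in C(x)$, so that $C(x)=\langle x,y,Z(G)\rangle$ is abelian because $x$ and $y$ commute. You bypass the case split by making explicit the key fact that $x\in Z(C(x))\setminus Z(G)$; this forces $[Z(C(x)):Z(G)]>1$, so one prime factor is absorbed there and another in $[G:C(x)]$, leaving $C(x)/Z(C(x))$ of order at most a single prime. Both arguments rest on the same observation---that $x$ is central in its own centralizer---but your version packages it as a single index count rather than a two-generator argument, and it makes the irrelevance of the finiteness of $G$ (only $[G:Z(G)]<\infty$ is used) immediately visible.
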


\begin{proof}
Let $x \in G \setminus Z(G)$. If $\frac{C(x)}{Z(G)}$ is cyclic, then $C(x)$ is abelian. Now, suppose $\mid \frac{C(x)}{Z(G)} \mid=pq$. Then $o(xZ(G))=p, q$ or $pq$. If $o(xZ(G))=pq$, then $C(x)$ is abelian. Next suppose $o(xZ(G))\neq pq$. Then there exists some $y \in C(x)$ such that 
$\frac{C(x)}{Z(G)}= \langle xZ(G), yZ(G) \rangle$. Consequently, $C(x)=\langle x, y, Z(G) \rangle$ and hence $C(x)$ is abelian. If $\mid \frac{C(x)}{Z(G)} \mid=pr$ or $qr$, then using similar arguments we can show that $C(x)$ is abelian.
\end{proof}

\begin{rem}\label{rem1}
Recall that a group $G$ is said to be a CA-group if $C(x)$ is abelian for any $x \in G \setminus Z(G)$. It is easy to see that for such groups $C(x) \cap C(y)=Z(G)$ for any two distinct proper centralizers $C(x)$ and $C(y)$. Also we have seen that if $\mid \frac{G}{Z(G)} \mid=pqr$ where $p, q, r$ are primes (not necessarily distinct), then $G$ is a CA-group. 
\end{rem}

The authors in \cite[Proposition 3.6]{con} showed that if $G$ is a finite group such that $\frac{G}{Z(G)}$ is isomorphic to a simple group, then then $G$ and $G'$ are isoclinic groups.
In the following we generalize this result as follows:

\begin{prop}\label{CG118}
Let $G$ be a finite group such that $\frac{G}{Z(G)}$ is perfect. Then $G$ and $G'$ are isoclinic groups.
\end{prop}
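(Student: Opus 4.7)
The plan is to exhibit explicit isoclinism maps $\alpha : G/Z(G) \to G'/Z(G')$ and $\beta : G' \to (G')'$ that are compatible with commutation. The key observation, which I would establish first, is that the hypothesis forces $G'$ itself to be perfect, so that $(G')' = G'$ and $\beta$ can simply be the identity.

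First, I would unpack the hypothesis. Since $G/Z(G)$ is perfect, we have $(G/Z(G))' = G/Z(G)$, which gives $G = G'Z(G)$. Using $[G',Z(G)]=[Z(G),Z(G)]=1$ and the expansion of $[G'Z(G),G'Z(G)]$, I conclude that $G'' = [G',G'] = [G'Z(G),G'Z(G)] = G'$, so $G'$ is perfect as well. This is the crucial structural consequence.

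Next, I would pin down $Z(G')$. One inclusion $G'\cap Z(G)\subseteq Z(G')$ is automatic. For the reverse, take $x\in Z(G')$ and any $g\in G$; write $g=hz$ with $h\in G'$, $z\in Z(G)$ (using $G=G'Z(G)$). Then $xg = xhz = hxz = hzx = gx$, so $x\in Z(G)$, forcing $Z(G')=G'\cap Z(G)$. By the second isomorphism theorem,
\[
\frac{G}{Z(G)} \;=\; \frac{G'Z(G)}{Z(G)} \;\cong\; \frac{G'}{G'\cap Z(G)} \;=\; \frac{G'}{Z(G')},
\]
and I take $\alpha$ to be this canonical isomorphism, sending $gZ(G)=hzZ(G)$ to $hZ(G')$.

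Finally, I would set $\beta=\mathrm{id}_{G'} : G' \to G''=G'$ and check the isoclinism compatibility condition. For $x,y\in G$ with decompositions $x=h_1z_1$, $y=h_2z_2$ ($h_i\in G'$, $z_i\in Z(G)$), the centrality of $z_1,z_2$ gives $[x,y]=[h_1,h_2]$, while $\alpha(xZ(G))=h_1Z(G')$ and $\alpha(yZ(G))=h_2Z(G')$. Since $\beta$ is the identity, $\beta([x,y]) = [h_1,h_2]$, which is exactly $[\alpha(xZ(G)),\alpha(yZ(G))]$ lifted to $G'$, as required. The main obstacle is really conceptual — recognizing that perfection of $G/Z(G)$ propagates to $G'$ itself, which collapses the usually nontrivial $\beta$ to the identity map and makes the compatibility verification essentially automatic.
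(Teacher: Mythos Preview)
Your argument is correct and shares the same pivot as the paper's proof: from $(G/Z(G))'=G/Z(G)$ you obtain $G=G'Z(G)$. The paper then finishes in one line by invoking \cite[Lemma~2.7]{pL95} (any subgroup $H\le G$ with $HZ(G)=G$ is isoclinic to $G$), whereas you essentially reprove that lemma for $H=G'$ by hand---showing $G''=G'$, $Z(G')=G'\cap Z(G)$, and checking the commutator compatibility explicitly---so the mathematical content is identical, just more self-contained.
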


\begin{proof}
Suppose $\frac{G}{Z(G)}$ is perfect. Then $\frac{G}{Z(G)}=(\frac{G}{Z(G)})'=\frac{G'Z(G)}{Z(G)}$ and consequently, $G'Z(G)=G$. Therefore in view of \cite[Lemma 2.7]{pL95}, $G$ is isoclinic to $G'$.
\end{proof}

As an immediate consequence we have the following necessary condition for a finite CG-group.

\begin{prop}\label{CG111}
Let $G$ be a finite group such that $\frac{G}{Z(G)}$ is perfect. Then $G$ is  not a CG-group.
\end{prop}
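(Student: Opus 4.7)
The plan is to derive a contradiction by combining Proposition~\ref{CG118} with the fact, already noted in the paper, that no finite perfect group is a CG-group. So I would assume for contradiction that $G$ is a CG-group with $G/Z(G)$ perfect, and then apply Proposition~\ref{CG118} to obtain that $G$ is isoclinic to $G'$.

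The next step is to exploit two invariance properties of isoclinism. First, isoclinic groups have isomorphic derived subgroups (this is built into the definition of an isoclinism, which in particular provides an isomorphism $G' \to (G')'$), so $G' \cong (G')'$ and hence $G'$ is perfect. Second, $|\Cent(G)|$ is an isoclinism invariant: distinct element centralizers of any group $H$ are in bijection with the distinct centralizer quotients $C_H(x)/Z(H) \leq H/Z(H)$, so $|\Cent(H)|$ depends only on the isomorphism type of $H/Z(H)$, and in particular $|\Cent(G)| = |\Cent(G')|$. I would state this observation explicitly (or just cite it) since it is the only non-formal input.

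With these two facts in hand, the contradiction is immediate: the CG-property for $G$ gives
\[
|\Cent(G')| \;=\; |\Cent(G)| \;=\; |G'| + 2 \;=\; |(G')'| + 2,
\]
so $G'$ is itself a CG-group. But $G'$ is perfect, contradicting the observation recorded just after \cite[Theorem~7]{ctc092} in the introduction to Section~2 that no finite perfect group is a CG-group.

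The main (and really only) obstacle is the invariance statement $|\Cent(G)| = |\Cent(G')|$; once that is granted, everything else is formal. Since the proof of this invariance is a one-line observation about the natural bijection $C(x) \leftrightarrow C(x)/Z(G)$ together with the isomorphism $G/Z(G) \cong G'/Z(G')$ induced by an isoclinism, I would either insert that one line or cite a standard reference for it rather than expand on it further.
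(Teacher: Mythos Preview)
Your proof is correct and follows essentially the same route as the paper: apply Proposition~\ref{CG118} to get $G$ isoclinic to $G'$, use the isoclinism invariance of $|\Cent(\,\cdot\,)|$ (the paper cites \cite[Lemma~2.3]{non} for this), and then invoke \cite[Theorem~7]{ctc092} for the contradiction. The only cosmetic difference is that the paper derives the contradiction directly from $|\Cent(G')|=|G'|+2$ without the intermediate step of noting that $G'$ is perfect and a CG-group.
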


\begin{proof}
Suppose $\frac{G}{Z(G)}$ is perfect. Then by Proposition \ref{CG118}, $G$ is isoclinic to $G'$ and consequently, using \cite[Lemma 2.3]{non}, we have $\mid \Cent(G) \mid =\mid \Cent(G') \mid$. Now, if $G$ is a CG-group, then $\mid \Cent(G) \mid =\mid \Cent(G') \mid=\mid G' \mid+2$, which is impossible by  \cite[Theorem 7]{ctc092}.
\end{proof}

The following result follows using technique similar to \cite[Lemma 2.7]{en09}.

\begin{prop}\label{CG10}
Let $p$ be the smallest prime divisor of the order of a group $G$ and $\mid G' \mid=p$. Then  $G$ is  a CG-group if and only if $\frac{G}{Z(G)} \cong C_p \times C_p$. 
\end{prop}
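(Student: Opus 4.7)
The plan is to prove both directions by counting element-centralizers in terms of the quotient $\bar{G} := G/Z(G)$.

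\emph{Sufficiency.} Assume $\bar{G} \cong C_p \times C_p$. There are $p+1$ subgroups of order $p$ in $\bar{G}$; each lifts to a subgroup $H$ with $Z(G) \leq H$ and $[H:Z(G)] = p$, and such $H$ is abelian since $H = \langle x, Z(G)\rangle$ for any $x \in H \setminus Z(G)$. For such $x$, $H \subseteq C(x) \subsetneq G$, so $C(x)/Z(G)$ is a proper subgroup of $\bar G$ containing $\bar{x}$ of order $p$, forcing $C(x) = H$. This yields $p+1$ distinct proper centralizers, and with $G$ itself we obtain $|\Cent(G)| = p+2 = |G'|+2$.

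\emph{Necessity.} Assume $G$ is a CG-group. First I would show $G' \subseteq Z(G)$: conjugation on $G' \cong C_p$ gives a homomorphism $G \to \Aut(G') \cong C_{p-1}$, and since every prime divisor of $|G|$ is at least $p$ and hence coprime to $p-1$, the image is trivial. Hence $G$ has nilpotency class at most $2$, and the commutator descends to a bi-multiplicative alternating form $c\colon \bar{G} \times \bar{G} \to G' \cong \mathbb{F}_p$ whose left and right kernels are trivial. Next I would argue that $\bar{G}$ is elementary abelian of rank $k$ over $\mathbb{F}_p$: any $\bar{y}$ with order coprime to $p$, or with $\bar{y}^p \neq 1$, would lie in the right kernel of $c$ (using $c(\bar{x},\bar{y})^{|\bar{y}|} = c(\bar x, \bar y^{|\bar y|}) = 1$ and $c(\bar{x}, \bar{y}^p) = c(\bar{x},\bar{y})^p = 1$), contradicting non-degeneracy. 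Then $\bar{x} \mapsto c(\bar{x},\cdot)$ is an injection $\bar{G} \hookrightarrow \operatorname{Hom}(\bar{G}, \mathbb{F}_p)$ between finite sets of equal size $p^k$, hence an isomorphism, so each proper centralizer $C(x)/Z(G) = \ker c(\bar{x},\cdot)$ is a hyperplane of $\mathbb{F}_p^k$ and every hyperplane arises. Counting hyperplanes, $|\Cent(G)| = 1 + (p^k-1)/(p-1)$, and the CG-condition forces $p^k = p^2$, i.e.\ $\bar G \cong C_p \times C_p$.

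The main obstacle is establishing that $\bar{G}$ is elementary abelian; once the alternating $\mathbb{F}_p$-valued form is in hand and shown to be non-degenerate, the centralizer count reduces to elementary linear algebra over $\mathbb{F}_p$. A secondary subtlety is ensuring $c$ genuinely descends to a function of $\bar{x},\bar{y}$ alone, which is where nilpotency class $\leq 2$ (derived from the smallest-prime hypothesis in the first step) is essential.
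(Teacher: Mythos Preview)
Your argument is correct. The paper does not actually give a proof of this proposition; it only states that ``the result follows using technique similar to \cite[Lemma 2.7]{en09}''. Your write-up is therefore considerably more detailed than what appears in the paper, and the alternating-form viewpoint you adopt is a clean and standard way to organise the count: once $G'\le Z(G)$ is established from the smallest-prime hypothesis, the commutator pairing $c:\bar G\times\bar G\to G'\cong\mathbb{F}_p$ is bilinear and non-degenerate, forcing $\bar G$ to be an $\mathbb{F}_p$-vector space, and the proper centralizers are exactly the hyperplanes.

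One small wording slip: in your elementary-abelian step you write that ``any $\bar y$ with $\bar y^{\,p}\neq 1$ would lie in the right kernel'', but your parenthetical justification $c(\bar x,\bar y^{\,p})=c(\bar x,\bar y)^p=1$ shows it is $\bar y^{\,p}$, not $\bar y$, that lies in the kernel. The conclusion (that $\bar y^{\,p}=1$ for all $\bar y$, hence $\bar G$ has exponent $p$) is of course unaffected, and in fact this single observation already makes the ``order coprime to $p$'' case redundant. With that cosmetic fix the proof is complete.
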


As an application to this we have the following result. Given a group $G$, $\omega(G)$ denotes the size of a  maximal set of pairwise non-commuting elements of $G$. 

\begin{prop}\label{CG191}
Let $G$ be a finite non-abelian metacyclic $p$-group, where $p>2$ is a prime. Then $G$ is a CG-group if and only if $\frac{G}{Z(G)} \cong C_p \times C_p$.  
\end{prop}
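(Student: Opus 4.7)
The plan is to reduce both directions to Proposition~\ref{CG10} via the structural identity $|G/Z(G)| = |G'|^2$, valid for every non-abelian metacyclic $p$-group with $p$ odd. For $(\Leftarrow)$, if $G/Z(G) \cong C_p \times C_p$ then $|G/Z(G)| = p^2$ makes $G$ of nilpotency class two, and as $G = \langle a,b\rangle$ is $2$-generated, $a^p, b^p \in Z(G)$ yields $[a,b]^p = [a^p,b] = 1$, forcing $|G'| = p$; Proposition~\ref{CG10} then gives that $G$ is a CG-group.

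For $(\Rightarrow)$, fix the standard presentation
\[
G = \langle a,b \mid a^{p^r} = 1,\ b^{p^s} = a^{p^c},\ bab^{-1} = a^{1+p^t}\rangle,\qquad 1 \le t < r,
\]
available for every non-abelian metacyclic $p$-group with $p$ odd. Then $G' = \langle a^{p^t}\rangle$ has order $p^{r-t}$, and since $\operatorname{ord}_{p^r}(1+p^t) = p^{r-t}$ a direct computation identifies $Z(G) = \{a^i b^j : p^{r-t}\mid i,\ p^{r-t}\mid j\}$, whence $|G/Z(G)| = p^{2(r-t)} = |G'|^2$. Consequently $G/Z(G) \cong C_p \times C_p$ is equivalent to $|G'| = p$, and Proposition~\ref{CG10} closes the argument provided we prove $r-t = 1$.

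To this end, suppose $r-t \ge 2$ and derive a contradiction by exhibiting more than $|G'|+2 = p^{r-t}+2$ distinct element centralizers. The elements $a^{p^m}$ for $0\le m<r-t$ yield $r-t$ pairwise distinct centralizers $C(a^{p^m}) = \langle a, b^{p^{r-t-m}}\rangle$, and the elements $b^{p^n}$ for $1 \le n < r-t$ give $r-t-1$ further centralizers $C(b^{p^n}) = \langle a^{p^{r-t-n}}, b\rangle$, distinct from the former because they contain $b$ while the $C(a^{p^m})$ do not. For the mixed elements $a^i b^l$ with $p\nmid l$, analysis of the commutator equation $j\bigl((1+p^t)^l - 1\bigr) \equiv i\bigl((1+p^t)^k - 1\bigr) \pmod{p^r}$ shows that $C(a^i b^l)$ depends only on the residue of $i/u_l$ modulo $p^{r-t}$, where $u_l = \bigl((1+p^t)^l - 1\bigr)/p^t$ is a unit mod $p^{r-t}$; this contributes $p^{r-t}$ additional distinct centralizers. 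Including $G$ itself, we obtain $|\Cent(G)| \ge p^{r-t} + 2(r-t) > p^{r-t}+2$ for $r-t \ge 2$, the desired contradiction.

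The principal obstacle is verifying the distinctness of the $p^{r-t}$ mixed-element centralizers via the parameterization by $i/u_l$; once that is settled, treatment of the non-split case $c<r$ requires only minor additional bookkeeping, since both the commutator equation and the parameterization remain unaffected by whether $b^{p^s}$ is trivial or equals $a^{p^c}$.
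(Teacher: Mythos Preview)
Your argument is correct, but it takes a genuinely different route from the paper's. The paper's proof is a two-line appeal to the Fouladi--Orfi formula $\omega(G)=\dfrac{|G'|}{p}(1+p)$ for odd-prime metacyclic $p$-groups together with the general inequality $|\Cent(G)|\ge\omega(G)+1$: if $G$ is a CG-group then $|G'|+2\ge\dfrac{|G'|}{p}(1+p)+1$, which forces $|G'|=p$, and then Proposition~\ref{CG10} finishes both directions. You instead work directly from the standard presentation, establish the structural identity $|G:Z(G)|=|G'|^{2}$, and for the forward direction manufacture enough explicit centralizers to exceed $|G'|+2$ once $r-t\ge 2$. Your lower bound $|\Cent(G)|\ge p^{r-t}+2(r-t)$ is weaker than the $\omega$-bound $p^{r-t}+p^{r-t-1}+1$, but either suffices.

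What each buys: the paper's approach is dramatically shorter but imports a nontrivial external result; yours is self-contained and exposes the centralizer lattice explicitly, at the cost of considerably more bookkeeping. Regarding your ``principal obstacle'': the distinctness of the $p^{r-t}$ mixed-element centralizers follows at once from the observation that $u_{1}=\bigl((1+p^{t})-1\bigr)/p^{t}=1$, so $a^{k}b\in C(a^{i}b^{l})$ iff $k\equiv i/u_{l}\pmod{p^{r-t}}$, and hence the centralizer determines the parameter $\alpha=i/u_{l}\bmod p^{r-t}$. Note also that the presentation carries the implicit constraint $s\ge r-t$ (needed for the action of $b$ on $\langle a\rangle$ to have order dividing $p^{s}$), which is what makes your center computation $|G:Z(G)|=p^{2(r-t)}$ valid; you should state this. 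With that said, your remark that the non-split case needs no extra work is accurate: the commutation condition $i\,v_{j}\equiv k\,v_{l}\pmod{p^{r}}$ depends only on the conjugation relation, not on the value of $b^{p^{s}}$.
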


\begin{proof}
In view of \cite{fouladi5}, we have $\omega(G)=\frac{\mid G'\mid}{p}(1+p)$. Now, suppose $G$ is a CG-group. Then $\mid G' \mid+2 \geq \omega(G)+1=\frac{\mid G'\mid}{p}(1+p)+1$. Consequently, we have $\mid G' \mid=p$. Now, the result follows using Proposition \ref{CG10}.
\end{proof}

The following proposition gives a sufficient condition for a finite group to be a CG-group.

\begin{prop}\label{CG7}
Let $G$ be a finite non-abelian group with an abelian normal subgroup of prime index. Then
\begin{enumerate}
	\item  $G$ is a CG-group.
	\item  If $\frac{G}{Z(G)}$ is abelian, then $\frac{G}{Z(G)}$ is elementary abelian. 
	\item  If $\frac{G}{Z(G)}$ is non-abelian, then $\frac{G}{Z(G)}$ a CG-group. In particular, if $\frac{G}{Z(G)}$ is of order $p^r$ for some prime $p$, then $\mid Cent(G) \mid= p^{r-1}+2$.
\end{enumerate}.
\end{prop}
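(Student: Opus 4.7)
Let $A$ denote the abelian normal subgroup of $G$ of prime index $p$, and fix $x\in G\setminus A$, so that $G=A\langle x\rangle$. Part (a) is immediate from \cite[Theorem 2.3]{baishya}, exactly as in Proposition \ref{EX111} and the other earlier propositions. For parts (b) and (c), the first observation would be $Z(G)\subseteq A$: otherwise, maximality of $A$ forces $AZ(G)=G$, and then every element of $G$ is a product of something in $A$ with something central, which would make $G$ abelian, contrary to hypothesis.

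The central technical step---needed for both (b) and (c)---is to identify $A/Z(G)\cong G'$ through the map $\psi\colon A\to G'$ given by $\psi(a)=[a,x]$. Writing $\phi(a)=xax^{-1}$, I would verify in turn: (i) $\psi$ is a homomorphism, by a direct calculation exploiting that $\phi$ preserves the abelian group $A$; (ii) $\ker\psi=Z(G)\cap A=Z(G)$, since any element of $A$ that commutes with $x$ already commutes with all of $A$, hence with all of $G$; and (iii) $\psi$ is surjective, first noting that $G'=[A,G]$ because $G/A$ is cyclic, and then writing an arbitrary generator $[a,bx^k]=a\phi^k(a)^{-1}$ as a telescoping product $\psi(a)\psi(\phi(a))\cdots\psi(\phi^{k-1}(a))$, which lies in the image of $\psi$.

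For part (b), assuming $G/Z(G)$ is abelian gives $G'\subseteq Z(G)$. Since $x^p\in A$ is abelian and commutes with both $A$ and $x$, we have $x^p\in Z(G)$. Because $\phi$ fixes each $\psi(a)\in Z(G)$, an easy induction yields $\phi^k(a)=a\psi(a)^{-k}$; combining with $\phi^p=\operatorname{id}_A$ forces $\psi(a)^p=1$. Hence $G'\cong A/Z(G)$ has exponent $p$, and together with $(xZ(G))^p=1$ this shows every generator of the abelian group $G/Z(G)$ has order dividing $p$, so $G/Z(G)$ is elementary abelian.

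For part (c), the first assertion follows by applying part (a) to the non-abelian quotient $G/Z(G)$, which inherits $A/Z(G)$ as an abelian normal subgroup of prime index $p$. For the numerical statement, $|G/Z(G)|=p^r$ gives $|A/Z(G)|=p^{r-1}$, hence $|G'|=p^{r-1}$ by the isomorphism above, and part (a) yields $|\Cent(G)|=|G'|+2=p^{r-1}+2$. The principal obstacle I anticipate is step (iii), the surjectivity of $\psi$; once the telescoping identity is in hand, everything else is elementary bookkeeping about orders of generators.
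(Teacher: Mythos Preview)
Your argument is correct, but for parts (b) and (c) it follows a genuinely different route from the paper's. The paper does not build the commutator map $\psi$ at all. For (b), the paper instead invokes \cite[Theorem~A]{ctc095} to conclude that $G$ is a CA-group, observes that $\{C(x)/Z(G):x\notin Z(G)\}$ is then a partition of $G/Z(G)$, and quotes the classical fact \cite[p.~571]{zappa} that an abelian group admitting a nontrivial partition is elementary abelian. For the numerical assertion in (c), the paper again uses the CA-structure to count cosets of $Z(G)$ inside the proper centralisers directly, rather than computing $|G'|$.

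Your approach is more self-contained: the isomorphism $A/Z(G)\cong G'$ that you establish is exactly the content of Isaacs' Lemma~12.12 (quoted later in the paper as Proposition~\ref{isaacs}) specialised to $|G/A|=p$, so you are in effect reproving that lemma and then reading off both (b) and the order formula in (c) from it, without any appeal to the CA-group literature or to partition theory. The paper's route, by contrast, ties the proposition into the CA-group framework that runs through the rest of the article and avoids the commutator bookkeeping you flag in step~(iii). Either argument is short; yours has the advantage of making the identity $|A/Z(G)|=|G'|$ explicit at this early stage, while the paper's keeps the external citations doing the work.
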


\begin{proof}
a) See \cite[Theorem 2.3]{baishya}. It may be mentioned here that \cite[Theorem 3.4]{fouladi}) is a particular case of this result, where the author obtained the result for $p$-groups ($p$ a prime )only, noting that $\mid Cent(G) \mid=\omega(G)+1$ if and only if $G$ is a CA-group (\cite[Lemma 2.6]{ed09}).

b) By \cite[Theorem A]{ctc095}, $G$ is a CA-group and consequently, $\lbrace \frac{C(x)}{Z(G)} / x \in G \setminus Z(G)\rbrace$ is a partition of $\frac{G}{Z(G)}$ (see \cite[Remark 2.1]{ed09}). Therefore if  $\frac{G}{Z(G)}$ is abelian, by \cite[ p. 571]{zappa}, we have $\frac{G}{Z(G)}$ is elementary abelian.

c) Suppose $\frac{G}{Z(G)}$ is non-abelian. Let $N$ be an abelian normal subgroup of $G$ of prime index. Then $N=C(x)$ for some $x \in G \setminus Z(G)$. In the present scenerio, we have $\frac{C(x)}{Z(G)}=C(xZ(G))$, and consequently, $C(xZ(G))$ is an abelian normal subgroup of $\frac{G}{Z(G)}$  of prime index.  Therefore by \cite[Theorem 2.3]{baishya}, $\frac{G}{Z(G)}$ is a CG-group.

Again, suppose  $\frac{G}{Z(G)}$ is non-abelian of order $p^r$ for some prime $p$. By \cite[Theorem A]{ctc095}, $G$ is a CA-group. Therefore $C(x) \cap C(y) = Z(G)$ for any $x,y \in G \setminus Z(G), xy \neq yx$. Now,  $N=C(x)$ is a  centralizer of $G$ of index $p$.  Clearly $C(x)$ will contain exactly $p^{r-1}$ distinct right cosets of $Z(G)$. Therefore the number of right cosets of $Z(G)$ (other than $Z(G)$) left for the remaining proper centralizers is $p^r-p^{r-1}=p^{r-1}(p-1)$. In the present scenario, one can verify that any proper centralizer other than $C(x)$ will contain exactly $p$ distinct right cosets of $Z(G)$. Hence $\mid Cent(G) \mid= p^{r-1}+2$.
\end{proof}

As an application of the above proposition, we have the following result for minimal non-abelian group. Recall that a minimal non-abelian group is a non-abelian group all of whose proper subgroups are abelian. By \cite[Aufgaben III. 5.14]{huppert}, we have if $G$ is a finite  minimal non-abelian group, then $G$ can have at the most two distinct prime divisors and  if $G$ is not a prime power group, then $G=PQ$, where $P$ is a cyclic $p$-Sylow subgroup of $G$ and $Q$ is the elementary abelian minimal normal $q$-Sylow subgroup of $G$.

\begin{prop}\label{CG6}
Let $G$ be a finite minimal non-abelian group.  Then
\begin{enumerate}
	\item  $G$ is a CG-group. In particular, (see \cite[Proposition 2.2]{rostami}) if $G$ is a $p$-group ($p$ a prime), then we have $\mid Cent(G) \mid= p+2$. Otherwise, $G$ is a primitive $\mid Q \mid+2$ centralizer group, where $Q$ is the normal Sylow subgroup of $G$.
	\item  If $G$ is a $p$-group ($p$ a prime), then $\frac{G}{Z(G)} \cong C_p \times C_p$. Otherwise, $\frac{G}{Z(G)}$ is a CG-group.
\end{enumerate}.
  
\end{prop}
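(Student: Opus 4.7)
The plan is to deduce both parts from Proposition \ref{CG7} together with Huppert's structure theorem for finite minimal non-abelian groups (Aufgaben III.5.14 of \cite{huppert}, recalled just before the statement). What is needed in each of the two cases of that theorem is an abelian normal subgroup of prime index in $G$, after which both (a) and (b) follow mechanically.

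Case 1 ($G$ is a $p$-group). Any maximal subgroup $M$ has index $p$ and, being proper, is abelian by the minimal non-abelian hypothesis; maximal subgroups of a $p$-group are normal, so Proposition \ref{CG7}(a) immediately gives part (a). For the count and for (b), I would invoke the classical fact that a minimal non-abelian $p$-group satisfies $|G'|=p$ and $|G/Z(G)|=p^2$; since $G/Z(G)$ is non-cyclic this forces $G/Z(G)\cong C_p\times C_p$, proving (b) in this case, and the CG equality then yields $|\Cent(G)|=|G'|+2=p+2$.

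Case 2 ($G=PQ$ with $P=\langle x\rangle$ cyclic of $p$-power order and $Q$ the elementary abelian minimal normal $q$-Sylow). If $|P|=p$, take $N=Q$, which is an abelian normal subgroup of prime index $p$. If $|P|>p$, set $H=\langle x^p\rangle Q$, a proper subgroup of $G$ of index $p$; by the minimal non-abelian hypothesis $H$ is abelian, so $x^p$ centralizes $Q$, and as $x^p$ lies in the cyclic $P$ it also centralizes $P$, whence $x^p\in Z(G)$. Then $H$ is abelian, normal (since $Q\triangleleft G$ and $\langle x^p\rangle\subseteq Z(G)$) and of prime index $p$. In either situation Proposition \ref{CG7}(a) gives (a). Since $G/Q\cong P$ is abelian one has $G'\subseteq Q$, and the minimality of $Q$ forces $G'=Q$ (else $G'=1$ and $G$ is abelian); therefore $|\Cent(G)|=|G'|+2=|Q|+2$. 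For (b) in this case, if $G/Z(G)$ were abelian then $G'=Q\subseteq Z(G)$, making $Q$ commute with the cyclic $P$ and so rendering $G=P\times Q$ abelian, a contradiction; hence $G/Z(G)$ is non-abelian and Proposition \ref{CG7}(c) yields that $G/Z(G)$ is a CG-group.

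The main obstacle is the non-$p$-group case of (a): the obvious abelian normal subgroup $Q$ need not have prime index, and one has to manufacture the larger subgroup $\langle x^p\rangle Q$ and use the minimal non-abelian hypothesis on this proper subgroup to secure the centrality of $x^p$. Everything else is bookkeeping once Proposition \ref{CG7} becomes available.
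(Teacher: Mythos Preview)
Your argument is correct and essentially identical to the paper's: in both cases one exhibits an abelian normal subgroup of prime index (a maximal subgroup when $G$ is a $p$-group; the subgroup $\langle x^p\rangle Q$, which the paper writes as $QH$ with $H$ the index-$p$ subgroup of the cyclic $P$, otherwise) and then invokes Proposition~\ref{CG7}. The minor differences are cosmetic: the paper cites \cite[Lemma~2.5]{niketora} for $|G'|=p$ and Proposition~\ref{CG10} for $G/Z(G)\cong C_p\times C_p$ in the $p$-group case, where you argue directly; and for (b) in the non-$p$-group case the paper observes $Z(G)\subsetneq P$ so that $G/Z(G)$ is not a prime-power group, while you show non-abelianity of $G/Z(G)$ by a direct contradiction.

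The one point you leave unaddressed is the adjective \emph{primitive} in the non-$p$-group clause of (a): this asserts not only $|\Cent(G)|=|Q|+2$ but also $|\Cent(G/Z(G))|=|Q|+2$. The paper obtains this from $G'\cap Z(G)=1$ together with \cite[Lemma~3.1]{en09}. You have the ingredients already (since $Q$ is minimal normal and not central, $Q\cap Z(G)=1$, hence $G'\cap Z(G)=1$; your part~(b) then gives $|\Cent(G/Z(G))|=|(G/Z(G))'|+2=|G'|+2=|Q|+2$), so it is only a matter of stating it.
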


\begin{proof}
a) Let $G$ be a finite minimal non-abelian group. If $G$ is a $p$-group, for some prime $p$, then $G$ has an abelian normal subgroup of prime index and hence by Proposition \ref{CG7}, $G$ is a CG-group.

Next, suppose $G$ is not a $p$-group. Let $P$ be a cyclic Sylow subgroup and $Q$ be the elementary abelian Sylow subgroup of $G$ respectively. Then $P$ has a normal subgroup $H$ of prime index and consequently, $QH$ is an abelian normal subgroup of $G$ of prime index. Therefore by Proposition \ref{CG7}, $G$ is a CG-group. 

Now, if $G$ is a $p$-group, then $\mid Cent(G) \mid= p+2$, noting that we have $\mid G' \mid=p$ by \cite[Lemma 2.5]{niketora}. On the otherhand, in view of \cite[Lemma 3.1]{en09}, we have $G$ is a primitive $\mid Q \mid+2$ centralizer group, where $Q$ is the normal Sylow subgroup of $G$, niting that in the present scenario, we have $G'=Q$ and $\mid G' \cap Z(G) \mid =1$.

b) If $G$ is a $p$-group, then the result follows from Peoposition \ref{CG10}, noting that we have $\mid G' \mid=p$ by \cite[Lemma 2.5]{niketora}. On the other hand, if $G$ is not a prime power group, then $\frac{G}{Z(G)}$ is not a prime power group, noting that in the present scenario, we have $Z(G) \subsetneq P$, where $P$ is a cyclic Sylow subgroup of $G$. Now, the result follows from Proposition \ref{CG7}.
\end{proof}

Recall that a Frobenius group $G$ is said to be minimal if no proper subgroup of $G$ is Frobenius. In this connection we have the following result:

\begin{prop}\label{CG1}
Let $G$ be a  Frobenius group with kernel $K$ and complement $H$. 
\begin{enumerate}
	\item  If $G$ is minimal Frobenius, then $G$ is a CG-group, 
	\item  If $K$ is cyclic, then $G$ is a CG-group,
	\item  If $H$ is abelian, then $G$ is a CG-group if and only if $G'$ is abelian.
\end{enumerate}.
\end{prop}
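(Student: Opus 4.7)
The plan is to prove (c) first as the substantive case and then deduce (b) and (a) from it. I will rely on the standard structural facts for a Frobenius group $G=K\rtimes H$: the centralizer of every non-identity element of $K$ sits inside $K$; the centralizer of every non-identity element of any $G$-conjugate $H^x$ sits inside $H^x$; $H$ is self-normalizing, so it has exactly $|K|$ pairwise trivially intersecting conjugates; $K$ is nilpotent; and $\gcd(|K|,|H|)=1$.

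For part (c), assume $H$ is abelian. Since each conjugate $H^x$ is then abelian, every non-identity element of $H^x$ has centralizer $H^x$ itself, so the $|K|$ conjugates contribute exactly $|K|$ distinct centralizers. Adding the centralizer of the identity and the centralizers arising from elements of $K\setminus\{1\}$ (which lie inside $K$ and hence, because $K\cap H^x=1$, do not overlap with any $H^x$) yields
\[
 |\Cent(G)| \;=\; 1 + \bigl|\{C_K(k):k\in K\setminus\{1\}\}\bigr| + |K|.
\]
For $|G'|$, the quotient $G/K\cong H$ is abelian, so $G'\subseteq K$; conversely, coprime action together with the fixed-point-free hypothesis forces $C_K(H)=1$, hence $K=[K,H]\subseteq G'$. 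Thus $G'=K$, and the CG equation $|\Cent(G)|=|G'|+2$ reduces to $|\{C_K(k):k\in K\setminus\{1\}\}|=1$. This last condition is equivalent to $K$ being abelian, and since $G'=K$ it is also equivalent to $G'$ being abelian, proving (c).

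Part (b) then follows from (c): if $K$ is cyclic then $\Aut(K)$ is abelian, and the fixed-point-free action $H\to\Aut(K)$ is faithful, so $H$ embeds in an abelian group and is itself abelian; now (c) applies with $G'=K$ cyclic, hence abelian. For part (a), a minimal Frobenius group cannot admit any proper non-trivial $H$-invariant subgroup $K_0$ of $K$ (else $K_0H$ would be a proper Frobenius subgroup) nor any proper non-trivial subgroup $H_0$ of $H$ (else $KH_0$ would be a proper Frobenius subgroup), so $K$ is elementary abelian and $|H|$ is prime; hence $K$ is an abelian normal subgroup of prime index in $G$ and Proposition \ref{CG7}(a) yields the conclusion. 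The main obstacle is the centralizer bookkeeping in (c): I must confirm that distinct conjugates $H^x$ produce distinct centralizer subgroups (using $H^x\cap H^y=1$ for $x,y$ in different cosets of $H$) and that none of the $K$-centralizers coincides with a conjugate of $H$ (using $K\cap H^x=1$).
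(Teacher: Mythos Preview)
Your proof is correct. The core ideas coincide with the paper's---both rely on $G'=K$ and the centralizer structure of a Frobenius group---but the organisation and level of detail differ. The paper treats (a), (b), (c) separately, outsourcing the structural facts to citations (\cite{perumal} for the minimal Frobenius structure, \cite{Fcyclic} for ``$K$ cyclic $\Rightarrow$ $H$ cyclic'', \cite{herzog} for $G'=K$) and each time concluding with the phrase ``from the definition of Frobenius group'' without spelling out the centralizer count. You instead prove (c) first with an explicit count $|\Cent(G)|=1+|\{C_K(k):k\neq 1\}|+|K|$ and a self-contained derivation of $G'=K$ via coprime action, then deduce (b) by embedding $H$ faithfully in the abelian group $\Aut(K)$, and (a) via Proposition~\ref{CG7}(a). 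Your route is more economical (the implicit centralizer argument is done once rather than three times) and makes transparent exactly why the CG equation forces $K$ abelian; the paper's version is shorter on the page only because the work is hidden in the references.
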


\begin{proof}
a) If $G$ is a minimal Frobenius group, then by \cite[Theorem 3.2.14]{perumal},  $K$ is elementary abelian and $H$ has prime order. Again, by \cite[Theorem 2.2]{herzog}, $G'=K$. Therefore from the definition of Frobenius group, $G$ is a CG-group.

b) If $K$ is cyclic, then in view of \cite{Fcyclic}, $H$ is cyclic. In the present scenario, by \cite[Theorem 2.2]{herzog}, we have $G'=K$. Therefore from the definition of Frobenius group, $G$ is a CG-group. 

c) If $H$ is abelian, then by \cite[Theorem 2.2]{herzog}, we have $G'=K$. Now, if $G$ is a CG-group, then $G'$ must be abelian.
Again, if $G'$ is abelian, then from the definition of Frobenius group, $G$ is a CG-group.
\end{proof}

\begin{rem}\label{Remark3}
If $G$ is a finite solvable group in which centralizers of non-identity elements are abelian, then $G$ is a Frobenius group with abelian kernel and cyclic complement (see \cite[Proposition 3.1.1, Proposition 1.2.4]{elizabeth}). In this connection we have the following result:
\end{rem}

\begin{prop}\label{CG3}
Let $G$ be a finite group such that $C(x)$ is abelian for every non-identity element $x \in G$. Then  $G$ is a CG-group if and only if $G$ is a Frobenius group with abelian kernel and cyclic complement. 
\end{prop}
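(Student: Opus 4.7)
The plan is to prove the two directions separately. The reverse implication should be a direct consequence of the Frobenius partition combined with Proposition \ref{CG1}(c), while the forward direction will require reducing to the solvable case so that Remark \ref{Remark3} can be applied.

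For the ``if'' direction, I would start with $G = K \rtimes H$ Frobenius with $K$ abelian and $H$ cyclic. The standard Frobenius partition gives $C_G(x) = K$ for every $x \in K \setminus \{1\}$ and $C_G(x) = gHg^{-1}$ (for the appropriate $g$) whenever $x$ lies in a Frobenius complement. Since $K$ and each conjugate of $H$ are abelian, the centralizer hypothesis holds. Because $H$ is abelian, part (c) of Proposition \ref{CG1} applies directly: $G' = K$ is abelian, so $G$ is a CG-group.

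For the ``only if'' direction, assume $G$ is a CG-group in which $C(x)$ is abelian for every $x\neq 1$. First I would observe that $G$ must be non-abelian, since an abelian group has $|\Cent(G)|=1 \neq |G'|+2$. Next, if some $z\in Z(G)\setminus\{1\}$ existed, then $C(z)=G$ would be abelian by hypothesis, contradicting non-abelianness; hence $Z(G)=1$. In particular $G$ is a CA-group in the paper's sense, and $G/Z(G)=G$. Applying Proposition \ref{CG111} then rules out $G$ being perfect, so $G' \subsetneq G$.

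The main obstacle is the final step: showing $G$ is solvable. Once solvability is in hand, Remark \ref{Remark3} immediately gives that $G$ is a Frobenius group with abelian kernel and cyclic complement, completing the proof. To establish solvability I would invoke the classical classification of finite groups in which centralizers of non-identity elements are abelian (Suzuki): any such group with trivial centre that is non-solvable must be isomorphic to $\mathrm{PSL}(2,2^n)$ for some $n\geq 2$. But such a group is simple and therefore perfect, contradicting $G'\subsetneq G$. Hence $G$ is solvable, and Remark \ref{Remark3} finishes the argument. The delicate point is that one really does need a structural input beyond what has been developed in the paper; without Suzuki-type classification, ruling out non-solvable CA-groups by elementary counting of centralizers against $|G'|+2$ would be the alternative route, but that looks substantially harder than citing the classification.
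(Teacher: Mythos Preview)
Your proof is correct and follows essentially the same route as the paper: for the forward direction, reduce to the solvable case by showing that non-solvability forces simplicity (hence perfectness, hence not a CG-group), and then apply Remark~\ref{Remark3}; the converse is handled via $G'=K$ exactly as you do. The only difference is in the external citation for the solvability step---the paper invokes \cite[Lemma~3.9]{abc} (non-solvable with the centralizer hypothesis implies simple) together with \cite[Theorem~7]{ctc092}, rather than Suzuki's full classification and Proposition~\ref{CG111}, so your worry about needing outside structural input is exactly right and the paper handles it the same way with a slightly lighter reference.
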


\begin{proof}
Suppose $G$ is a CG-group. If $G$ is  non-solvable, then by \cite[Lemma 3.9]{abc}, $G$ is simple  and consequently using \cite[Theorem 7]{ctc092}, $G$ is not a CG-group. Therefore $G$ is solvable. Now, the result follows from Remars \ref{Remark3}. 

Conversely, suppose $G$ is a Frobenius group with abelian kernel $K$ and cyclic complement. By   \cite[Theorem 2.2]{herzog}, we have $G'=K$. Therefore from the definition of Frobenius group, $G$ is a CG-group.  
\end{proof}

As an immediate consequence we have the following result:

\begin{prop}\label{CG4}
If $G$ is a non-abelian group of order $pqr$, $p,q,r$  being primes (not necessarily distinct), then $G$ is a CG-group.
\end{prop}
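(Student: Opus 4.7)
My plan is to proceed by a case analysis on the order of the central quotient $G/Z(G)$. Since $G$ is non-abelian, $G/Z(G)$ is non-cyclic, so $|G/Z(G)|>1$ is a non-prime divisor of $pqr$. Consequently, $|G/Z(G)|$ is either a product of exactly two primes (counted with multiplicity) or the full product $pqr$.

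First I would treat the case where $|G/Z(G)|$ is a product of two primes. If those primes are distinct, say $s$ and $t$, then the only non-cyclic group of order $st$ is the non-abelian semidirect product $C_t\rtimes C_s$, so $G/Z(G)\cong C_t\rtimes_\theta C_s$ for a non-trivial homomorphism $\theta$, and Proposition~\ref{EX2} applies. If instead the two primes coincide, say both equal $s$, then $G/Z(G)\cong C_s\times C_s$; this situation forces $|G|=p^3$ (otherwise $Z(G)$ would contain a full Sylow subgroup of coprime order, making $G$ an internal direct product of abelian groups, hence abelian), and since a non-abelian group of order $p^3$ has $|G'|=p$, Proposition~\ref{CG10} yields that $G$ is a CG-group. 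One could equivalently invoke the remark at the beginning of Section~2 on groups with central quotient of order $pq$.

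Next I would handle the remaining case $Z(G)=1$, that is $|G/Z(G)|=pqr$. Lemma~\ref{rem144} then shows that $C(x)$ is abelian for every $x\neq 1$. Using the standard fact that groups of order $pqr$ are solvable (Burnside's $p^a q^b$ theorem covers the situations with at most two distinct primes, and for three distinct primes one has a classical Sylow argument producing a normal Sylow subgroup), Remark~\ref{Remark3} yields that $G$ is a Frobenius group with abelian kernel and cyclic complement. The converse direction of Proposition~\ref{CG3} now gives that $G$ is a CG-group.

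The main obstacle is less any single calculation than bookkeeping: one must be sure the case split on $|G/Z(G)|$ is exhaustive, that the ``$|G/Z(G)|=s^2$'' branch really collapses onto $|G|=p^3$, and that the Frobenius conclusion of Remark~\ref{Remark3} is legitimate (which is why solvability has to be checked). Once these points are settled, each branch is closed by directly citing one of the previously established propositions.
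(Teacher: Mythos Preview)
Your proposal is correct and follows essentially the same route as the paper: the case split $Z(G)=1$ versus $Z(G)\neq 1$, with the former handled via the CA-property (Lemma~\ref{rem144}/Remark~\ref{rem1}) together with solvability and Proposition~\ref{CG3}, and the latter via the fact that groups with central quotient of order a product of two primes are CG-groups. The only difference is cosmetic: where the paper invokes \cite[Corollary~2.5]{baishya} directly for the $|G/Z(G)|=st$ case, you unpack it into the two sub-cases $s\neq t$ and $s=t$ (and your observation that the $s=t$ branch forces $|G|=p^3$ is correct, since a central Sylow subgroup of coprime order would split off as a direct factor), but you yourself note that the single citation suffices.
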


\begin{proof}
If $\mid Z(G) \mid =1$, then in view of Remark \ref{rem1}, $G$ is a solvable CA-group and therefore by Proposition \ref{CG3}, $G$ is a CG-group. Again, if $\mid Z(G) \mid \neq 1$, then $\mid \frac{G}{Z(G)} \mid $ is a product of two primes and hence by \cite[Corollary 2.5]{baishya}, $G$ is a CG-group. 
\end{proof}

\begin{prop}\label{CG5}
If $G$ is a non-abelian group of order $p^4$, $p$  being prime, then $G$ is a CG-group.
\end{prop}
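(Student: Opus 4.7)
The plan is to reduce to Proposition~\ref{CG7}(a) by producing an abelian normal subgroup of index $p$ in $G$. Since $G$ is a non-abelian $p$-group of order $p^4$ and $G/Z(G)$ cannot be cyclic, we must have $|Z(G)| \in \{p, p^2\}$, which splits the argument into two cases.

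The case $|Z(G)| = p^2$ is short: then $G/Z(G) \cong C_p \times C_p$, and the preimage in $G$ of any order-$p$ subgroup is a normal subgroup $K$ of order $p^3$ with $K/Z(K)$ a quotient of the cyclic group $K/Z(G)$, forcing $K$ abelian.

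The main obstacle lies in the case $|Z(G)| = p$. I would first rule out $G/Z(G)$ being abelian: otherwise $G$ has class $2$ with $|G'| = p$, so every $g \in G$ satisfies $g^p \in Z(G)$, making $G/Z(G)$ an $\mathbb{F}_p$-vector space; the commutator then induces a non-degenerate alternating $\mathbb{F}_p$-bilinear form into the one-dimensional space $G' \cong \mathbb{F}_p$, which forces $\dim_{\mathbb{F}_p} G/Z(G)$ to be even and contradicts $|G/Z(G)| = p^3$. Therefore $G/Z(G)$ is non-abelian of order $p^3$, so $|Z(G/Z(G))| = p$ and the second center $Z_2(G)$ has order $p^2$; moreover $Z_2(G)/Z(G)$ is cyclic and $Z(G) \subseteq Z(Z_2(G))$, so $Z_2(G)$ is abelian and, being the second center, characteristic in $G$.

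Finally, I would take $H := C_G(Z_2(G))$. The conjugation action gives $G/H \hookrightarrow \Aut(Z_2(G))$, and since $Z_2(G)$ is $C_{p^2}$ or $C_p \times C_p$ the $p$-part of $|\Aut(Z_2(G))|$ is $p$; hence $[G:H] \in \{1,p\}$. The index cannot be $1$ (else $Z_2(G) \subseteq Z(G)$, contradicting $|Z_2(G)| > |Z(G)|$), so $|H| = p^3$ with $Z_2(G) \subseteq Z(H)$, giving $|Z(H)| \geq p^2$ and hence $H$ abelian; normality follows from $Z_2(G)$ being characteristic. Proposition~\ref{CG7}(a) then concludes both cases.
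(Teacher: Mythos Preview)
Your proof is correct and follows exactly the paper's strategy: exhibit an abelian normal subgroup of index $p$ and invoke Proposition~\ref{CG7}(a). The only difference is that the paper simply cites this as ``well known'' for groups of order $p^4$, whereas you supply a complete self-contained argument via the case split on $|Z(G)|$.
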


\begin{proof}
It is well known that if $G$ is a non-abelian group of order $p^4$, $p$  being prime, then $G$ has an abelian normal subgroup of prime index and hence by Proposition \ref{CG7}, $G$ is a CG-group. 
\end{proof}

We need the following result to prove our next proposition:

\begin{prop}{(Lemma 12.12 \cite{imisaacs})}\label{isaacs}
Let $G$ be a finite group. If $A \vartriangleleft G$  with $A$ abelian and $G/A$ cyclic, then $\mid A \mid= \mid G' \mid \mid A \cap Z(G) \mid$.   
\end{prop}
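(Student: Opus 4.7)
The plan is to construct a homomorphism $\phi : A \to A$ whose kernel is $A \cap Z(G)$ and whose image is $G'$; the First Isomorphism Theorem will then give the stated identity immediately.

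Since $G/A$ is cyclic, I would pick $g \in G$ with $gA$ generating $G/A$, so that $G = \langle g\rangle A$, and let $\sigma \in \Aut(A)$ denote conjugation by $g$ (which makes sense because $A \vartriangleleft G$). Define $\phi : A \to A$ by $\phi(a) = a^{-1}\sigma(a) = a^{-1}gag^{-1}$. Because $A$ is abelian one may commute $a^{-1}$ past $\sigma(b)$ in the expansion
\[
\phi(ab) = b^{-1}a^{-1}\sigma(a)\sigma(b) = a^{-1}\sigma(a)\cdot b^{-1}\sigma(b) = \phi(a)\phi(b),
\]
so $\phi$ is a homomorphism. Its kernel is $\{a\in A : \sigma(a)=a\}$, i.e., the elements of $A$ commuting with $g$; since such an $a$ already commutes with the abelian group $A$ and $G=\langle g,A\rangle$, the kernel equals $A\cap Z(G)$.

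The decisive step is showing $\phi(A) = G'$. The inclusion $\phi(A)\subseteq G'$ is immediate from $\phi(a)=[a,g^{-1}]$. For the reverse, observe that $G/A$ is abelian, so $G'\leq A$, and since $G=\langle g\rangle A$ with both $\langle g\rangle$ and $A$ abelian, a standard commutator calculation yields $G' = [\langle g\rangle,A]$, the subgroup generated by $\{[g^i,a]:i\in\mathbb{Z},\ a\in A\}$. A direct computation gives $[g,a] = \sigma^{-1}(a)^{-1}a = \phi(\sigma^{-1}(a))\in\phi(A)$, and the identity $[g^{i+1},a] = [g,a]^{g^i}[g^i,a]$, together with the $\sigma$-invariance of $\phi(A)$ (which follows at once from $\sigma\circ\phi = \phi\circ\sigma$), allows one to conclude by induction on $|i|$ that $[g^i,a]\in\phi(A)$ for every $i\in\mathbb{Z}$. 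Hence $G'\subseteq\phi(A)$.

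With $\phi : A \twoheadrightarrow G'$ having kernel $A\cap Z(G)$, the First Isomorphism Theorem gives $|A|/|A\cap Z(G)| = |G'|$, which rearranges to the desired equality. The one place requiring some care is the reduction $G' = [\langle g\rangle, A]$ and the induction on $i$; the remaining steps are short direct verifications that rely on nothing beyond the commutativity of $A$.
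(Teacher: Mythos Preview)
The paper does not supply its own proof of this proposition; it is simply quoted as Lemma~12.12 of Isaacs, \emph{Character Theory of Finite Groups}. Your argument is correct and is essentially the proof given there: choose $g$ with $G=\langle g\rangle A$, define the map $\phi(a)=a^{-1}gag^{-1}=[a,g^{-1}]$, check (using that $A$ is abelian) that $\phi$ is a homomorphism with kernel $A\cap Z(G)$ and image $G'$, and apply the First Isomorphism Theorem.

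One minor remark: the inclusion $G'\subseteq\phi(A)$ can be obtained without the commutator induction. Having already noted that $\phi(A)$ is $\sigma$-invariant and hence normal in $G=\langle g,A\rangle$, observe that $\sigma(a)=a\cdot\phi(a)$, so in $G/\phi(A)$ the image of $g$ centralizes the image of $A$; as $A$ is abelian this forces $G/\phi(A)$ to be abelian, whence $G'\leq\phi(A)$. This is exactly the shortcut Isaacs uses, but your inductive route via $[g^{i+1},a]=[g,a]^{g^i}[g^i,a]$ is equally valid.
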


We now give another sufficient condition for a finite group to be a CG-group.

\begin{prop}\label{CG17}
Let $G$ be a finite group such that $\frac{G}{Z(G)}=\frac{K}{Z(G)} \rtimes \frac{H}{Z(G)}$ is a Frobenius group with $K$ and $H$ abelian. Then $G$ is a CG-group.  
\end{prop}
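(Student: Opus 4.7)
The plan is to determine all proper centralizers of $G$ explicitly from the Frobenius structure of $G/Z(G)$, count them, and independently compute $|G'|$ via Proposition \ref{isaacs}. Throughout, I would use that $Z(G)\subseteq K\cap H$, so subgroups of $G$ containing $Z(G)$ are in bijection with subgroups of $G/Z(G)$.

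First I would pin down the proper centralizers. For $x\in K\setminus Z(G)$, the coset $xZ(G)$ is a non-identity element of the Frobenius kernel $K/Z(G)$, so the Frobenius property gives $C_{G/Z(G)}(xZ(G))\subseteq K/Z(G)$; combined with $K/Z(G)$ being abelian, this yields $C_{G/Z(G)}(xZ(G))=K/Z(G)$. Lifting, $C_G(x)/Z(G)\subseteq K/Z(G)$ so $C_G(x)\subseteq K$, and the reverse inclusion holds since $K$ is abelian; hence $C_G(x)=K$. The identical argument applied to the complement shows that for $y\in G\setminus K$ with $y\notin Z(G)$ one has $C_G(y)=gHg^{-1}$, where $g(H/Z(G))g^{-1}$ is the unique conjugate of the complement in $G/Z(G)$ containing $yZ(G)$.

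Next I would count the conjugates of $H$. In a Frobenius group the complement is self-normalizing, so $N_{G/Z(G)}(H/Z(G))=H/Z(G)$, which (since $Z(G)\subseteq H$) lifts to $N_G(H)=H$; hence $H$ has $[G:H]=|K/Z(G)|$ distinct conjugates in $G$. Assembling the three types of centralizers,
\[
|\Cent(G)| \;=\; 1 + 1 + |K/Z(G)| \;=\; |K/Z(G)|+2,
\]
where the two $1$'s come from $G$ itself and from the subgroup $K$.

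To finish, I would show $|G'|=|K/Z(G)|$. The key input is that an abelian Frobenius complement must be cyclic: every Sylow subgroup of a Frobenius complement is either cyclic or generalized quaternion, abelianness rules out the latter, and an abelian group with cyclic Sylows is cyclic. Therefore $G/K\cong H/Z(G)$ is cyclic, and Proposition \ref{isaacs} applied with $A=K$ gives $|K|=|G'|\,|K\cap Z(G)|=|G'|\,|Z(G)|$ (using $Z(G)\subseteq K$). Hence $|G'|=|K/Z(G)|$, and combining with the centralizer count yields $|\Cent(G)|=|G'|+2$, so $G$ is a CG-group. The main subtleties are the transfer of Frobenius-theoretic data (self-normalizing complement and localization of centralizers into the kernel or a conjugate of the complement) from $G/Z(G)$ to $G$, which is clean because $Z(G)$ lies in both $K$ and $H$, and the cyclicity of $H/Z(G)$, which is exactly what allows Isaacs' lemma to be invoked.
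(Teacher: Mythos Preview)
Your proof is correct and follows the same overall strategy as the paper: both establish $|G'|=|K/Z(G)|$ by noting that $G/K\cong H/Z(G)$ is cyclic (an abelian Frobenius complement being cyclic) and applying Proposition~\ref{isaacs} with $A=K$. The difference lies in how the centralizer count $|\Cent(G)|=|K/Z(G)|+2$ is obtained. The paper simply cites \cite[Proposition~3.1]{amiri2019} for this, whereas you prove it directly by lifting the Frobenius structure: every $x\in K\setminus Z(G)$ has $C_G(x)=K$, every $y\in G\setminus K$ has $C_G(y)$ equal to the unique conjugate of $H$ containing $y$, and the self-normalizing property of the complement gives exactly $[G:H]=|K/Z(G)|$ such conjugates. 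Your route is more self-contained and makes explicit why the abelian hypotheses on both $K$ and $H$ are used (for $K$, to get $C_G(x)=K$ rather than merely $C_G(x)\subseteq K$; for $H$, both to pin down $C_G(y)=gHg^{-1}$ and to ensure cyclicity of $G/K$), while the paper's version is shorter at the cost of a black-box citation. One cosmetic point: the condition ``$y\notin Z(G)$'' in your complement case is automatic since $Z(G)\subseteq K$.
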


\begin{proof}
Using the third isomorphic theorem, we get $\frac{G}{K} \cong \frac{H}{Z(G)}$. Consequently,   we have $K$ is an abelian normal subgroup of $G$ such that $\frac{G}{K}$ is cyclic. In the present scenario, in view of Proposition \ref{isaacs}, $\mid K \mid=\mid G' \mid \mid K \cap Z(G) \mid$ which forces $\mid \frac{K}{Z(G)}\mid=\mid G' \mid$. Therefore by \cite[Proposition 3.1]{amiri2019}, $G$ is a CG-group.
\end{proof}

As a consequence we have the following corollaries:

\begin{cor}\label{CG17cor}
Let $G$ be a finite group such that $\frac{G}{Z(G)}=\frac{K}{Z(G)} \rtimes \frac{H}{Z(G)}$ is a Frobenius group with $H$ abelian. If $G'$ is abelian, then $G$ is a CG-group.  
\end{cor}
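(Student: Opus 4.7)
The plan is to reduce this corollary to Proposition \ref{CG17} by showing that, under the stated hypotheses, the commutator subgroup being abelian actually forces $K$ itself to be abelian. Once this reduction is achieved, no further work is needed.

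First I would apply Herzog's theorem \cite[Theorem 2.2]{herzog} to the Frobenius group $\frac{G}{Z(G)} = \frac{K}{Z(G)} \rtimes \frac{H}{Z(G)}$. Since the complement $\frac{H}{Z(G)}$ is abelian, Herzog's theorem yields
\[
\left(\frac{G}{Z(G)}\right)' = \frac{K}{Z(G)}.
\]
On the other hand, $\left(\frac{G}{Z(G)}\right)' = \frac{G'Z(G)}{Z(G)}$, so comparing these gives $K = G'Z(G)$.

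Next I would use the hypothesis that $G'$ is abelian. For any two elements $g_1 z_1, g_2 z_2 \in G'Z(G)$ with $g_i \in G'$ and $z_i \in Z(G)$, the central elements $z_i$ commute with everything and the elements of $G'$ commute among themselves, so $(g_1 z_1)(g_2 z_2) = (g_2 z_2)(g_1 z_1)$. Hence $K = G'Z(G)$ is abelian.

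Finally, with both $K$ and $H$ abelian and $G/Z(G)$ a Frobenius group with those kernel and complement, I would invoke Proposition \ref{CG17} to conclude that $G$ is a CG-group. The only genuinely nontrivial step is the first one, namely upgrading ``$G'$ abelian'' to ``$K$ abelian'' via the identification $K = G'Z(G)$ supplied by Herzog's theorem; everything else is formal.
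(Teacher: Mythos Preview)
Your proof is correct and follows essentially the same route as the paper's: both use Herzog's theorem to identify $K = G'Z(G)$, observe that $G'$ abelian makes $K$ abelian, and then invoke Proposition~\ref{CG17}. The only cosmetic difference is that the paper pauses to note that $H$ abelian forces the Frobenius complement $H/Z(G)$ to be cyclic before applying \cite[Theorem~2.2]{herzog}, whereas you invoke Herzog directly from abelianness of the complement; either formulation suffices.
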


\begin{proof}
Note that $H$ is abelian implies $\frac{H}{Z(G)}$ is cyclic and consequently, using \cite[Theorem 2.2]{herzog}, we have $(\frac{G}{Z(G)})'=\frac{G'Z(G)}{Z(G)}=\frac{K}{Z(G)}$. Now, if $G'$ is abelian, then $G'Z(G)$ is abelian and therefore, by Proposition \ref{CG17}, $G$ is a CG-group. 
\end{proof}

\begin{cor}\label{CG24}
Let $G$ be a finite group such that $\frac{G}{Z(G)}$ is minimal Frobenius group. If $G'$ is abelian, then $G$ is a CG-group.
\end{cor}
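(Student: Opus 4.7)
The plan is to reduce directly to Corollary \ref{CG17cor}. Suppose $\frac{G}{Z(G)} = \frac{K}{Z(G)} \rtimes \frac{H}{Z(G)}$ is a minimal Frobenius group, where $\frac{K}{Z(G)}$ is the kernel and $\frac{H}{Z(G)}$ the complement. By the structure theorem for minimal Frobenius groups used in the proof of Proposition \ref{CG1}(a) (namely \cite[Theorem 3.2.14]{perumal}), the kernel $\frac{K}{Z(G)}$ is elementary abelian and the complement $\frac{H}{Z(G)}$ has prime order; in particular $\frac{H}{Z(G)}$ is cyclic.

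The key verification is that $H$ itself is abelian. Since $\frac{H}{Z(G)}$ has prime order, for any $h \in H \setminus Z(G)$ we have $H = \langle h, Z(G)\rangle$, and every element of $Z(G)$ commutes with $h$, so $H$ is abelian.

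With $\frac{G}{Z(G)}$ a Frobenius group whose complement $H$ is abelian, and $G'$ abelian by hypothesis, every hypothesis of Corollary \ref{CG17cor} is met, and we conclude that $G$ is a CG-group.

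There is no substantive obstacle here; the proof is essentially a packaging step. The only point that requires a line of argument is the passage from $\frac{H}{Z(G)}$ being of prime order to $H$ being abelian, which is immediate from the central-extension structure.
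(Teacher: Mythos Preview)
Your proof is correct and follows essentially the same route as the paper: both invoke \cite[Theorem 3.2.14]{perumal} to get that the Frobenius complement of $G/Z(G)$ is cyclic (of prime order), and then appeal to Corollary~\ref{CG17cor}. You simply make explicit the passage from ``$H/Z(G)$ cyclic'' to ``$H$ abelian'' that the paper leaves implicit.
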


\begin{proof}
In view of \cite[Theorem 3.2.14]{perumal}, $\frac{G}{Z(G)}$ is a Frobenius group with cyclic complement and therefore, the result follows from Corollary \ref{CG17cor}. 
\end{proof}

\begin{cor}\label{CG31}
Let $G$ be a finite group such that $\frac{G}{Z(G)}$ is a Frobenius group with cyclic kernel. Then $G$ is a CG-group. 
\end{cor}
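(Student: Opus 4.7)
The plan is to reduce the statement directly to Proposition \ref{CG17} by verifying its two abelianness hypotheses. Write $\frac{G}{Z(G)} = \frac{K}{Z(G)} \rtimes \frac{H}{Z(G)}$, where $\frac{K}{Z(G)}$ is the (cyclic) Frobenius kernel and $\frac{H}{Z(G)}$ is a Frobenius complement. Then the task reduces to showing that both $K$ and $H$ are abelian subgroups of $G$; once this is established, Proposition \ref{CG17} applies and yields that $G$ is a CG-group.

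For the kernel, the key observation is that $Z(G) \subseteq K$, hence $Z(G) \subseteq Z(K)$, so that $K/Z(K)$ is a homomorphic image of the cyclic group $K/Z(G)$ and is therefore itself cyclic. By the standard fact that a group whose central quotient is cyclic is abelian, $K$ is abelian. For the complement, I would invoke the classical theorem (the reference \cite{Fcyclic} already used in the proof of Proposition \ref{CG1}(b)) that a Frobenius group with cyclic kernel has cyclic complement. Applied to $\frac{G}{Z(G)}$, this gives that $\frac{H}{Z(G)}$ is cyclic, and exactly the same central-quotient-cyclic argument as for $K$ then forces $H$ to be abelian. With both $K$ and $H$ abelian, the hypotheses of Proposition \ref{CG17} are satisfied, and the conclusion follows.

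There is no substantive obstacle in this argument: the work is entirely in correctly lifting cyclicity of $K/Z(G)$ to abelianness of $K$ (and similarly for $H$), and in recalling the cyclic-kernel-implies-cyclic-complement theorem for Frobenius groups. Once these two observations are in place, the corollary is an immediate instance of Proposition \ref{CG17}.
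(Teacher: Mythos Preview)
Your proposal is correct and follows essentially the same route as the paper: invoke \cite{Fcyclic} to obtain cyclicity of the Frobenius complement of $G/Z(G)$, and then apply Proposition \ref{CG17}. The paper's proof is terser and leaves implicit the step you spell out---namely, that cyclicity of $K/Z(G)$ and $H/Z(G)$ forces $K$ and $H$ themselves to be abelian via the central-quotient-cyclic argument---but the underlying logic is identical.
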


\begin{proof}
In view of \cite{Fcyclic},  $\frac{G}{Z(G)}$ has cyclic Frobenius complement and therefore the result follows from Proposition \ref{CG17}.
\end{proof}

In this connection we would like to mention the following three lemmas:

\begin{lem}\label{CG18}
Let $G$ be a finite group such that $\frac{G}{Z(G)}=\frac{K}{Z(G)} \rtimes \frac{H}{Z(G)}$ is a Frobenius group with $K$ and $H$ abelian. Then $\mid G' \cap Z(G) \mid=1$.
\end{lem}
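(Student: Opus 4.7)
The plan is to combine Proposition \ref{isaacs} with Herzog's description of the derived subgroup of a Frobenius group. The key observation is that the hypotheses force $K=G'Z(G)$, after which two independent formulas for $|K|$ collapse to the desired equality $|G'\cap Z(G)|=1$.

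First I would record the preparatory facts. Since every element of $Z(G)$ maps to the identity of $G/Z(G)$, which lies in the kernel $K/Z(G)$, we have $Z(G)\le K$, so $K\cap Z(G)=Z(G)$. Next, $H/Z(G)$ is an abelian Frobenius complement; an abelian Frobenius complement is always cyclic, so by the third isomorphism theorem $G/K\cong H/Z(G)$ is cyclic.

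Now, since $H/Z(G)$ is abelian, Herzog's theorem \cite[Theorem 2.2]{herzog} applied to the Frobenius group $G/Z(G)$ yields $(G/Z(G))'=K/Z(G)$, i.e., $G'Z(G)=K$. Applying Proposition \ref{isaacs} to the abelian normal subgroup $K$ with cyclic quotient $G/K$, I obtain
$$|K|=|G'|\cdot|K\cap Z(G)|=|G'|\cdot|Z(G)|.$$
On the other hand, the equality $K=G'Z(G)$ gives
$$|K|=\frac{|G'|\,|Z(G)|}{|G'\cap Z(G)|}.$$
Comparing the two expressions for $|K|$ immediately forces $|G'\cap Z(G)|=1$.

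This is essentially the same bookkeeping used in the proof of Proposition \ref{CG17}, extended by one extra identity for $|K|$ coming from the product formula. I do not anticipate any genuine obstacle; the only point requiring slight care is that Herzog's theorem must be applied to $G/Z(G)$ rather than to $G$ itself, which is legitimate because $H$ abelian forces $H/Z(G)$ abelian.
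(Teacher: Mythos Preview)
Your proof is correct and follows essentially the same route as the paper. The paper's version is more compressed: it simply invokes Proposition~\ref{CG17} (which already contains the Isaacs computation $|K/Z(G)|=|G'|$) together with Herzog's theorem to obtain $|G'|=|(G/Z(G))'|$, and then the result follows from $(G/Z(G))'\cong G'/(G'\cap Z(G))$; your version unpacks this into two explicit formulas for $|K|$ and compares them, but the ingredients and logic are identical.
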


\begin{proof}
In the present scenario,  from Proposition \ref{CG17} and \cite[Theorem 2.2]{herzog}, we have $\mid G' \mid= \mid \frac{K}{Z(G)}\mid=\mid(\frac{G}{Z(G)})'\mid$ and hence the result follows.    
\end{proof}

\begin{lem}\label{CG181}
Let $G$ be a finite group such that $\frac{G}{Z(G)}$ is a Frobenius group with cyclic kernel. Then $\mid G' \cap Z(G) \mid=1$.
\end{lem}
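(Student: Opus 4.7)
The plan is to reduce Lemma \ref{CG181} to the already-proved Lemma \ref{CG18}, whose hypotheses ask that both the Frobenius kernel and the Frobenius complement of $G/Z(G)$ (as subgroups of $G$) be abelian. So the whole task amounts to verifying those two abelianness conditions from the weaker assumption that $G/Z(G)$ is Frobenius with \emph{cyclic} kernel.

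Concretely, I would write $G/Z(G) = (K/Z(G)) \rtimes (H/Z(G))$ with $K/Z(G)$ the cyclic kernel. Both $K$ and $H$ contain $Z(G)$ (they are preimages of subgroups of $G/Z(G)$), so $Z(G) \subseteq Z(K) \cap Z(H)$. First, the reference \cite{Fcyclic} already cited in the proof of Corollary \ref{CG31} guarantees that a Frobenius group with cyclic kernel has cyclic complement, so $H/Z(G)$ is cyclic as well. Then $K/Z(K)$ is a homomorphic image of the cyclic group $K/Z(G)$, hence cyclic, and the standard fact that a group with cyclic central quotient is abelian forces $K$ to be abelian; the same argument applied to $H$ shows $H$ is abelian.

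With $K$ and $H$ both abelian, Lemma \ref{CG18} applies directly and yields $|G' \cap Z(G)| = 1$. I do not expect a genuine obstacle in this proof, since the only nontrivial external input is the cyclicity of the Frobenius complement when the kernel is cyclic, and that is already in the literature and used elsewhere in the paper. The only small subtlety to watch is confirming $Z(G) \subseteq Z(K)$ and $Z(G) \subseteq Z(H)$, which is immediate from the definition of $Z(G)$, so the reduction goes through cleanly.
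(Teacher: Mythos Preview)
Your proposal is correct and follows essentially the same route as the paper: invoke \cite{Fcyclic} to get a cyclic complement, then apply Lemma~\ref{CG18}. You are simply more explicit than the paper about why cyclicity of $K/Z(G)$ and $H/Z(G)$ forces $K$ and $H$ themselves to be abelian, which the paper leaves implicit.
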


\begin{proof}
In view of \cite{Fcyclic},  $\frac{G}{Z(G)}$ has cyclic Frobenius complement and therefore the result follows from Lemma \ref{CG18}.  
\end{proof}

\begin{lem}\label{CGcor18}
Let $G$ be a finite group such that $\frac{G}{Z(G)}=\frac{K}{Z(G)} \rtimes \frac{H}{Z(G)}$ is a Frobenius group with  $H$ abelian. If $G'$ is abelian, then $\mid G' \cap Z(G) \mid=1$.
\end{lem}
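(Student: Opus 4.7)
The plan is to argue exactly as in Lemma \ref{CG18}, but replacing the stronger hypothesis that $K$ is abelian with the weaker combination of ``$H$ abelian'' and ``$G'$ abelian'', and checking that the same numerical identity $|G'|=|K/Z(G)|$ still falls out. Once that identity is in hand, the conclusion follows from the standard isomorphism $G'Z(G)/Z(G)\cong G'/(G'\cap Z(G))$.

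First, I would unwind the hypothesis on the quotient. Since $H$ is abelian, $H/Z(G)$ is cyclic, so the Frobenius complement of $G/Z(G)$ is cyclic and Herzog's theorem \cite[Theorem 2.2]{herzog} gives $(G/Z(G))'=K/Z(G)$. Translating back via the correspondence theorem, this is the same as $G'Z(G)=K$. Under the extra hypothesis that $G'$ is abelian, $K=G'Z(G)$ is abelian; note also that $G/K\cong H/Z(G)$ is cyclic. So $K\trianglelefteq G$ is an abelian normal subgroup of $G$ with $G/K$ cyclic, and we are in position to apply Proposition \ref{isaacs}.

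Second, I would extract the key counting identity. Proposition \ref{isaacs} gives $|K|=|G'|\,|K\cap Z(G)|$, and since $Z(G)\subseteq K$ we have $K\cap Z(G)=Z(G)$. Hence $|K/Z(G)|=|G'|$. Combining this with $G'Z(G)=K$ and the second isomorphism theorem yields
\[
|G'|\;=\;|K/Z(G)|\;=\;|G'Z(G)/Z(G)|\;=\;\frac{|G'|}{|G'\cap Z(G)|},
\]
so $|G'\cap Z(G)|=1$, as required.

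There is no real obstacle here; the only thing to be careful about is that Proposition \ref{CG17} itself is stated with ``$K$ abelian'' as a hypothesis, so one cannot quote it as a black box. Instead, one should reuse its internal argument — namely the application of Proposition \ref{isaacs} to the pair $(K,G)$ — which only needs $K$ to be abelian with $G/K$ cyclic, and both of these follow from $G'$ abelian (plus $G'Z(G)=K$) and from $H$ abelian, respectively.
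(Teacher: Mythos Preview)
Your proof is correct and follows essentially the same route as the paper, which simply says ``use arguments similar to Corollary~\ref{CG17cor} and Lemma~\ref{CG18}'': first deduce $G'Z(G)=K$ via Herzog, then use $G'$ abelian to make $K$ abelian, then run the Isaacs count to get $|K/Z(G)|=|G'|$ and conclude. One small simplification of your closing caveat: once you have shown $K=G'Z(G)$ is abelian, you are literally in the hypotheses of Lemma~\ref{CG18} (both $K$ and $H$ abelian), so you may quote it directly rather than reproving the Isaacs step.
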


\begin{proof}
Using arguments similar to Corolary \ref{CG17cor} and Lemma  \ref{CG18} we get the result.  
\end{proof}

As a consequence of Proposition \ref{CG17}, we have the following three results:

\begin{prop}\label{CG112}
Let $G$ be a finite group such that $G'$ is of prime order $p$ and $G' \cap Z(G)=\lbrace 1 \rbrace$. Then  $G$ is  a CG-group.
\end{prop}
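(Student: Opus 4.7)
The plan is to verify the hypotheses of Proposition \ref{CG17} for $G$. Set $K := G'Z(G)$ and $C := C_G(G')$. Because $G' \cong C_p$ is abelian and $G' \cap Z(G) = 1$, the subgroup $K$ is abelian and normal in $G$, with $K \subseteq C$. The conjugation action of $G$ on $G'$ yields an embedding $G/C \hookrightarrow \Aut(G') \cong C_{p-1}$, so $G/C$ is cyclic of order dividing $p-1$; since $G' \not\subseteq Z(G)$, this quotient is nontrivial, which in particular forces $p \geq 3$.

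The key step is to show $C = K$, via the intermediate claim that $C$ is abelian. Suppose not; then $C' \subseteq G'$ is a nontrivial subgroup of a group of prime order, so $C' = G'$, and since $C$ centralizes $G'$ we have $G' = C' \subseteq Z(C)$. Pick any $y \in G \setminus C$ and any $c_1, c_2 \in C$. The commutators $[y, c_i]$ lie in $G' = C' \subseteq Z(C)$, so writing $y c_i y^{-1} = [y, c_i]\, c_i$ and expanding (using that each $[y, c_i]$ is central in $C$) one obtains $[y c_1 y^{-1},\, y c_2 y^{-1}] = [c_1, c_2]$. But this commutator also equals $y[c_1, c_2] y^{-1}$, so $y$ centralizes $[c_1, c_2]$. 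Letting $c_1, c_2$ range over $C$ shows $y$ centralizes all of $C' = G'$, contradicting $y \notin C$. Hence $C$ is abelian, and Proposition \ref{isaacs} applied to $C \trianglelefteq G$ yields $|C| = |G'| \cdot |C \cap Z(G)| = p \cdot |Z(G)| = |K|$; combined with $K \subseteq C$, this forces $C = K$.

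Passing to $\bar G = G/Z(G)$, we obtain $\bar K = \bar G'$ of order $p$, and $\bar G / \bar K \cong G/C$ is cyclic of order coprime to $p$. By Schur--Zassenhaus, $\bar K$ has a (necessarily cyclic) complement $\bar H$ in $\bar G$. Since $C_{\bar G}(\bar G') = \bar K$, we have $\bar H \cap C_{\bar G}(\bar G') = 1$, so $\bar H$ acts faithfully on $\bar K \cong C_p$; a faithful action on a cyclic group of prime order is automatically fixed-point-free, so $\bar G = \bar K \rtimes \bar H$ is Frobenius. Taking $H \leq G$ to be the preimage of $\bar H$, we have $Z(G) \leq Z(H)$ and $H/Z(G) \cong \bar H$ is cyclic, whence $H/Z(H)$ is cyclic and $H$ is abelian. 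All hypotheses of Proposition \ref{CG17} now hold, and $G$ is a CG-group.

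The principal obstacle is the commutator identity forcing $C_G(G')$ to be abelian; once that is in hand, Proposition \ref{isaacs}, Schur--Zassenhaus, and the already proved Proposition \ref{CG17} assemble into the conclusion.
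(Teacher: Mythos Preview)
Your proof is correct, but it follows a different path from the paper's. The paper observes that $(G/Z(G))'$ has order $p$ and $Z(G/Z(G))$ is trivial (both immediate from $G'\cap Z(G)=1$), and then invokes \cite[Proposition~5]{dR79} as a black box to conclude that $G/Z(G)$ is Frobenius with cyclic kernel and cyclic complement, finishing with Corollary~\ref{CG31}. You instead \emph{prove} this Frobenius structure from scratch: the commutator identity showing $C_G(G')$ must be abelian, followed by Proposition~\ref{isaacs} to pin down $C_G(G')=G'Z(G)$, and then Schur--Zassenhaus to split $G/Z(G)$. The paper's argument is shorter because it outsources the structural work to Rusin's result; your approach is longer but self-contained and makes transparent exactly where the hypothesis $|G'|=p$ is used (both in forcing $C'=G'$ in the contradiction argument and in guaranteeing the fixed-point-free action on $\bar K\cong C_p$). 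Either way one lands in Proposition~\ref{CG17}, so the two arguments converge at the same endpoint.
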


\begin{proof}
In the present scenario, we have  $(\frac{G}{Z(G)})'$ is of prime order $p$ and $Z(\frac{G}{Z(G)})$ is of order $1$. Therefore in view of \cite[Proposition 5]{dR79}, $\frac{G}{Z(G)}$ is a Frobenius group with cyclic kernel and cyclic complement. Therefore $G$ is a CG-group by Corollary \ref{CG31}.
\end{proof}

\begin{prop}\label{CG11}
If $G$ is a finite group such that $\mid \frac{G}{Z(G)} \mid =pqr$ or $p^2q$ for any primes $p<q<r$, then $G$ is a CG-group.
\end{prop}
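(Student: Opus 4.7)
The plan is to use Lemma \ref{rem144} to deduce that $G$ is a CA-group, and then to exploit the restricted structure of $\bar{G}:=G/Z(G)$ (whose order has only three prime factors counting multiplicity) in order to invoke either Proposition \ref{CG7} or Proposition \ref{CG17}.

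First, I would observe that $|\bar{G}|=pqr$ or $p^2q$ is a product of exactly three primes (with possible repetition). Lemma \ref{rem144} then yields that $C(x)$ is abelian for every $x\in G\setminus Z(G)$; that is, $G$ is a CA-group. Combined with Remark \ref{rem1}, distinct proper centralizers of $G$ meet exactly in $Z(G)$, and the family $\{\,\overline{C(x)}:=C(x)/Z(G) : x\in G\setminus Z(G)\,\}$ partitions $\bar{G}\setminus\{1\}$ into abelian subgroups of $\bar{G}$.

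Next, I would carry out a case analysis on the structure of $\bar{G}$. Non-abelian groups of order $pqr$ (with distinct primes $p<q<r$) are solvable, possess a normal Sylow subgroup, and admit only a small number of Sylow/action configurations; running through these, one produces in each case either an abelian subgroup of prime index in $\bar{G}$ (e.g., the product of the normal Sylow $r$-subgroup with a Sylow factor that lies in the kernel of the action on $C_r$), or a Frobenius decomposition $\bar{G}=\bar{K}\rtimes\bar{H}$ with $\bar{K}$ and $\bar{H}$ both abelian. A parallel argument handles non-abelian groups of order $p^2q$ with $p<q$, using that either the Sylow $q$-subgroup or the (abelian) Sylow $p$-subgroup is normal.

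Finally, in the first subcase, I would use that an abelian subgroup $\bar{A}$ of prime index in $\bar{G}$ is automatically maximal abelian and hence must coincide with some $\overline{C(x)}$ in the CA-partition, so that its preimage $A=C(x)$ is an abelian normal subgroup of $G$ of prime index; Proposition \ref{CG7}(a) then gives that $G$ is a CG-group. In the Frobenius subcase, an analogous matching identifies the required abelian preimages of $\bar{K}$ and $\bar{H}$ in $G$, so Proposition \ref{CG17} yields the same conclusion.

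The main obstacle is the case-by-case structural analysis of $\bar{G}$ together with the verification that the abelian lifts from $\bar{G}$ to $G$ remain abelian --- in general, an abelian subgroup of $\bar{G}$ need not lift to an abelian subgroup of $G$, so one must match it with a centralizer appearing in the CA-partition. Particularly delicate are the configurations where $\bar{G}$ has small order (e.g., $|\bar{G}|=12$) or where $\bar{G}$ has a non-trivial centre, since then the natural full preimage in $G$ of an abelian subgroup of $\bar{G}$ may fail to be abelian.
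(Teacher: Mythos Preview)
Your overall strategy --- use Lemma \ref{rem144} to see that $G$ is a CA-group, analyse the structure of $\bar G=G/Z(G)$, and then invoke Proposition \ref{CG7} or Proposition \ref{CG17} --- is the same route the paper follows. The paper streamlines the casework by quoting results that force $Z(\bar G)=1$ in the $pqr$ case and in the $p^2q$ case with $q>3$, then uses Remark \ref{Remark3} to obtain a Frobenius structure on $\bar G$ with \emph{cyclic} kernel and complement, so that Corollary \ref{CG31} applies directly; your version replaces this with an explicit Sylow analysis and the ``match with a centralizer in the CA-partition'' device to produce abelian lifts.

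The obstacle you single out, however, is genuine and is not surmounted by your argument (nor by the paper's). Take $p=2$, $q=3$, so $\bar G\cong A_4$; here the Frobenius kernel $\bar K\cong C_2\times C_2$ is not cyclic, and its full preimage $K$ in $G$ need not be abelian. For $G=SL(2,3)$ one has $Z(G)=\{\pm I\}$, $\bar G\cong A_4$, and $K=Q_8$. The proper centralizers of $SL(2,3)$ are exactly the three cyclic subgroups of order $4$ inside $Q_8$ and the four cyclic subgroups of order $6$, so no centralizer $C(x)$ has $\overline{C(x)}=V_4$; your proposed matching therefore cannot manufacture an abelian lift of $\bar K$, and Proposition \ref{CG17} does not apply. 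Worse, $SL(2,3)$ is a counterexample to the proposition as stated: $|\Cent(SL(2,3))|=1+3+4=8$, while $SL(2,3)'=Q_8$ gives $|G'|+2=10$. The paper's proof hides the same gap behind the unjustified assertion ``in the present scenario, we have $K$ and $H$ both are abelian'' in the $A_4$ case. So the difficulty you correctly flagged for $|\bar G|=12$ is not merely delicate; it is fatal.
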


\begin{proof}
Suppose $\mid \frac{G}{Z(G)} \mid =pqr$. By \cite[Proposition 2.5]{baishya2}, we have  $\mid Z(\frac{G}{Z(G)}) \mid =1$ and therefore in view of Remark \ref{rem1}, $\frac{G}{Z(G)}$ is a solvable CA-group. But then using Remark \ref{Remark3}, we have $\frac{G}{Z(G)}$ is a Frobenius group with cyclic kernel and cyclic complement. Now, the result follows using Proposition \ref{CG17}.

Next, suppose $\mid \frac{G}{Z(G)} \mid =p^2q$. For $q=3$, we have $\frac{G}{Z(G)} \cong A_4$ or $D_{12}$. If $\frac{G}{Z(G)} \cong A_4$, then
$\frac{G}{Z(G)}  \cong \frac{K}{Z(G)} \rtimes \frac{H}{Z(G)}$ is a Frobenius group with kernel of order $4$ and complement of order $3$. In the present scenario, we have $K$ and $H$ both are abelian, and therefore by Proposition \ref{CG17},  $G$ is a CG-group. On the otherhand if $\frac{G}{Z(G)} \cong D_{12}$, then by \cite[Proposition 2.8, Proposition 2.9]{baishya}, $G$ is a CG-group.

Now, we consider the case when $q>3$. Using \cite[Proposition 2.11]{baishya2}, we have
 $\mid Z(\frac{ G}{Z(G)})\mid=1$ or $p$. Now, If $\mid Z(\frac{G}{Z(G)}) \mid =p$, then by \cite[Proposition 2.11]{baishya2},  $\frac{G}{Z(G)} \cong C_{pq} \rtimes C_p$ and so $G$ is a CG-group by \cite[Proposition 2.9]{baishya}.

Again, if $\mid Z(\frac{G}{Z(G)}) \mid =1$, then in view of Remark \ref{rem1} and Remark \ref{Remark3}, $ \frac{G}{Z(G)}$ is a Frobenius group with cyclic kernel and  cyclic  complement of order $p^2$. Now, the result follows using Corollary \ref{CG31}. 
\end{proof}

\begin{prop}\label{CG14}
If $G$ is a finite group such that $\mid \frac{G}{Z(G)} \mid =pqr $ or $pq^2$,  $p<q<r$ be primes, then $\mid G' \cap Z(G) \mid=1$.
\end{prop}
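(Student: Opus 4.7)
The plan is to reduce to a Frobenius situation covered by Lemma \ref{CG18} or Lemma \ref{CG181}, which extract $|G'\cap Z(G)|=1$ via the Isaacs identity of Proposition \ref{isaacs}. Throughout, write $H=G/Z(G)$.

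For the case $|G/Z(G)|=pqr$, the analysis in the proof of Proposition \ref{CG11} already yields $Z(H)=1$, so by Remark \ref{rem1} the group $H$ is a CA-group (being of order a product of three primes), and by Remark \ref{Remark3} it is a Frobenius group with cyclic kernel and cyclic complement. Lemma \ref{CG181} then gives $|G'\cap Z(G)|=1$ directly.

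For the case $|G/Z(G)|=pq^2$ with $p<q$, I would observe first that since $n_q\mid p<q$, the Sylow $q$-subgroup $Q$ of $H$ is normal and abelian of order $q^2$, so $H=Q\rtimes P$ with $P\cong C_p$ acting faithfully (otherwise $H$ is abelian). A direct check gives $Z(H)=C_Q(P)=Q^P\subseteq Q$, and in particular $|Z(H)|\in\{1,q\}$ (the value $q^2$ would make $H$ abelian). When $Z(H)=1$, exactly the Remark \ref{rem1}--Remark \ref{Remark3} chain used above shows $H$ is a Frobenius group with abelian kernel $Q$ and cyclic complement $P$, and Lemma \ref{CG18} finishes this subcase.

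The main obstacle is ruling out the subcase $|Z(H)|=q$. An eigenvalue analysis of the faithful embedding $P\hookrightarrow \Aut(Q)$ shows that $Q^P\ne 1$ forces $Q\cong C_q\times C_q$ with $P$ fixing a line, and splitting $Q$ as a sum of the two $P$-eigenspaces gives $H\cong C_q\times (C_q\rtimes C_p)$. Since $Z(H)=Z_2(G)/Z(G)$, every $x\in Z_2(G)$ satisfies $[x,G]\subseteq Z(G)$, which produces a well-defined bilinear antisymmetric pairing
\[
\delta:Z(H)\otimes H^{\mathrm{ab}}\longrightarrow Z(G),\qquad \delta(\bar x,\bar y)=[x,y],
\]
antisymmetric because $[y,x]=[x,y]^{-1}$. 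Now $H^{\mathrm{ab}}=C_q\times C_p$, and the image of $Z(H)$ in $H^{\mathrm{ab}}$ is exactly the $C_q$-factor. Coprimality of $p$ and $q$ forces $\delta|_{C_q\otimes C_p}=0$, while antisymmetry together with $q$ odd forces any generator $c$ of $Z(H)$ to satisfy $2\delta(c,c)=0=q\delta(c,c)$, whence $\delta(c,c)=0$ and $\delta|_{C_q\otimes C_q}=0$ as well. Thus $\delta\equiv 0$, so every element of $Z_2(G)$ is central, i.e.\ $Z_2(G)\subseteq Z(G)$, contradicting $Z_2(G)/Z(G)=Z(H)\cong C_q\ne 1$. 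Hence $|Z(H)|=q$ cannot occur, completing the argument.
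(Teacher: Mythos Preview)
Your argument is correct, and in the $pq^2$ case it takes a genuinely different route from the paper. The paper simply invokes \cite[Proposition~2.11]{baishya2} to assert that $|Z(G/Z(G))|=1$ whenever $|G/Z(G)|=pq^2$ with $p<q$, and then applies Remarks~\ref{rem1} and~\ref{Remark3} together with Lemma~\ref{CG18} exactly as you do in the centerless subcase. You instead \emph{prove} that $|Z(H)|\neq q$ by a self-contained capability argument: lifting a generator of $Z(H)=Z_2(G)/Z(G)$ and showing that the induced homomorphism $H^{\mathrm{ab}}\to Z(G)$ vanishes, forcing $Z_2(G)=Z(G)$. This buys independence from the external reference and actually reproves the relevant piece of \cite{baishya2}; the paper's route is of course shorter.

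Two small remarks on presentation. First, your parenthetical ``(otherwise $H$ is abelian)'' deserves one more sentence: if $H$ were abelian of order $pq^2$ it would be either cyclic (impossible for a central quotient) or $C_p\times C_q\times C_q$, and your very same pairing argument applied to a lift of the $C_p$-generator shows the latter is not capable either. Second, the word ``antisymmetric'' is doing no work and is slightly misleading, since the two slots of $\delta$ lie in different groups; the vanishing on the $C_q$-factor of $H^{\mathrm{ab}}$ follows immediately from $\delta(c,\bar c)=[x,x]=1$ once you observe that the $C_q$-factor is exactly the image of $Z(H)$, so you can drop the ``$2\delta(c,c)=0$'' step entirely.
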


\begin{proof}
By \cite[Proposition 2.5, Proposition 2.11]{baishya2}, we have $\mid Z(\frac{G}{Z(G)})\mid=1$ and hence in view of Remark \ref{rem1} and Remark \ref{Remark3}, $\frac{G}{Z(G)} \cong \frac{K}{Z(G)} \rtimes \frac{H}{Z(G)}$ is a Frobenius group with abelian kernel and cyclic complement.  In the present scenario, one can verify that $K$ and $H$ both are abelian. Therefore the result follows using Lemma \ref{CG18}. 
\end{proof}

For finite groups with central quotient of order $p^3$, $p$ being prime, we have the following result:

\begin{prop}\label{CG15}
Let $G$ be a finite group such that $\mid \frac{G}{Z(G)} \mid =p^3$, $p$ being prime. Then $G$ is a CG-group if and only if $G$ has an abelian normal subgroup of prime index.
\end{prop}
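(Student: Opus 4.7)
The plan is to handle the two directions separately. The ``if'' direction is immediate from Proposition~\ref{CG7}: an abelian normal subgroup of prime index already guarantees that $G$ is a CG-group, regardless of $G/Z(G)$.

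For the ``only if'' direction, assume $G$ is a CG-group with $|G/Z(G)|=p^3$. Since this order factors as $p\cdot p\cdot p$, Lemma~\ref{rem144} shows every proper centralizer is abelian, so $G$ is a CA-group; consequently $C(x)\cap C(y)=Z(G)$ for any non-commuting non-central pair (Remark~\ref{rem1}), and the distinct proper centralizers $C_1,\dots,C_k$ partition $G\setminus Z(G)$. Each $C_i/Z(G)$ is a non-trivial proper subgroup of the order-$p^3$ quotient, so $|C_i/Z(G)|\in\{p,p^2\}$. Letting $a$ and $b$ count the centralizers with $|C_i/Z(G)|=p$ and $p^2$ respectively, the partition identity becomes
\[
a(p-1)+b(p^2-1)=p^3-1,\qquad\text{i.e.,}\qquad a+b(p+1)=p^2+p+1,
\]
while the CG-condition $|\Cent(G)|=a+b+1=|G'|+2$ reads $a+b=|G'|+1$. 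Subtracting the two relations yields $|G'|=p(p+1-b)$.

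The crux is then to argue that $|G'|$ is a power of $p$. Since $G/Z(G)$ is a $p$-group and $Z(G)$ is abelian, $G$ is nilpotent and hence is the direct product of its Sylow subgroups. For any prime $q\ne p$ the Sylow $q$-subgroup of $G$ maps trivially into the $p$-group $G/Z(G)$, so it lies in $Z(G)$ and is abelian; therefore $G=P\times H$ with $H\le Z(G)$ abelian and $G'=P'$ a $p$-group. Writing $|G'|=p^k$ with $k\ge1$ (as $G$ is non-abelian), the identity above becomes $p^{k-1}=p+1-b$. The elementary inequality $p^2>p+1$, valid for every prime $p$, combined with $b\ge0$, forces $k\le 2$, so $b\in\{p,1\}$. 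In either case $b\ge1$, so there is at least one centralizer $C_i$ of index $p$ in $G$; being a proper centralizer in a CA-group it is abelian, and since $C_i/Z(G)$ is a subgroup of index $p$ in the $p$-group $G/Z(G)$ it is normal there, whence $C_i$ is normal in $G$ of prime index $p$.

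The main obstacle is the nilpotency step. The counting identity $|G'|=p(p+1-b)$ by itself admits the value $b=0$ with $|G'|=p(p+1)$, which would defeat the conclusion; it is essential to exploit that $G/Z(G)$ is a $p$-group to force $|G'|$ into the short list $\{p,p^2\}$ and thereby secure $b\ge1$.
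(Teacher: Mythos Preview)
Your proof is correct, and the overall architecture---CA-group via Lemma~\ref{rem144}, then a counting argument on the sizes $|C_i/Z(G)|\in\{p,p^2\}$---matches the paper's. The difference lies in how the key constraint ``$|G'|$ is a $p$-power'' is obtained and how normality is deduced.

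The paper argues by contradiction: assuming no proper centralizer has index $p$, every non-central element has centralizer of index $p^2$, and then Ito's theorem on groups with a single non-trivial conjugate-class size forces $G=A\times P$ with $A$ abelian and $P$ a $p$-group; the count $|\Cent(G)|=p^2+p+2$ then gives $|G'|=p(p+1)$, contradicting that $G'\le P$. Having produced a centralizer $C(y)$ of index $p$, the paper invokes the fact that two distinct abelian subgroups of index $p$ would intersect in a subgroup containing $Z(G)$ properly (indeed centralising $G$), contradicting $|G/Z(G)|=p^3$; uniqueness then yields $C(y)\lhd G$.

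Your route is more elementary on both counts. You observe at the outset that $G/Z(G)$ a $p$-group forces $G$ nilpotent, so $G'$ is a $p$-group with no case hypothesis needed and without appealing to Ito's theorem. Your identity $|G'|=p(p+1-b)$ together with $|G'|=p^k$ then pins down $b\in\{1,p\}$, so in particular $b\ge 1$. For normality you simply use that an index-$p$ subgroup of the $p$-group $G/Z(G)$ is normal. This is shorter and avoids the external input from \cite{ito}; the paper's uniqueness argument gives the sharper information $b=1$ (your case $b=p$ cannot actually occur, since two distinct index-$p$ abelian centralizers would meet in a subgroup of size at least $p\,|Z(G)|>|Z(G)|$, violating the CA intersection property), but you do not need that for the stated conclusion.
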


\begin{proof}
Let $G$ be a CG-group. In view of Remark \ref{rem1}, we have $G$ is a CA-group and  $C(x) \cap C(y) = Z(G)$ for any $x,y \in G \setminus Z(G), xy \neq yx$. Now, suppose $G$ has no centralizer of index $p$. Then $\mid C(x) \mid =\frac{\mid G \mid}{p^2}$ for all $x \in G \setminus Z(G)$ and consequently by \cite{ito}, $G=A \times P$, where $A$ is an abelian group and $P$ is a $p$-group. In the present scenario,  each proper centralizer of $G$ will contain exactly $p$ distinct right coset of $Z(G)$. Therefore $\mid Cent(G) \mid= p^2+p+2$. But then $\mid G' \mid=p(p+1)$, which is impossible. Therefore $G$ must have a centralizer of prime index $p$, say, $C(y)$ for some $y \in G \setminus Z(G)$.  Clearly, in view of \cite[Exercise 3 (p. 9)]{kur}, $G$ cannot have another centralizer of index $p$ and consequently $C(y) \lhd G$. Thus $G$ has an abelian normal subgroup of prime index.

Conversely, if $G$ has an abelian normal subgroup of prime index, then by Proposition \ref{CG7}, $G$ is a CG-group.
\end{proof}

As an application of Proposition \ref{CG17}, we also have the following result:

\begin{prop}\label{CG9}
Let $G$ be a finite non-abelian solvable group. If $N_G(H)=H$ for all non-abelian $H \leq G$, then $G$ is a CG-group. 
\end{prop}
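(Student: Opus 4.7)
The plan is to reduce to Proposition \ref{CG7}(a) by producing an abelian normal subgroup of prime index. The key observation that drives everything is that the hypothesis forces every proper normal subgroup of $G$ to be abelian: if $N \vartriangleleft G$ with $N \neq G$ were non-abelian, then applying the hypothesis to $N$ would give $N = N_G(N)$, whereas normality forces $N_G(N) = G$, a contradiction with $N \neq G$.

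With this observation in hand, the proof plan is short. First, invoke solvability: since $G$ is finite, solvable, and non-trivial, $G$ has a maximal normal subgroup $M$, and $G/M$ is a simple solvable group, hence cyclic of prime order. Second, $M$ is a proper normal subgroup, so by the observation above $M$ is abelian. Therefore $M$ is an abelian normal subgroup of prime index in $G$. Finally, apply Proposition \ref{CG7}(a) directly to conclude that $G$ is a CG-group.

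There is essentially no serious obstacle; the only thing to be careful about is verifying that the hypothesis was not meant to exclude $H = G$, but since $N_G(G) = G$ automatically, the condition is vacuous for $H = G$ and the argument applies only to proper non-abelian subgroups, which is exactly what is needed. The work has already been carried out in Proposition \ref{CG7}(a), which is why this proposition is stated and proved via reduction rather than from scratch.
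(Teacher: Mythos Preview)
Your argument is correct and in fact more direct than the paper's. The paper splits into two cases. In the nilpotent case it invokes \cite[Proposition~2.6]{niketora} to see that $G$ is a minimal non-abelian $p$-group and then applies Proposition~\ref{CG6}. In the non-nilpotent case it quotes \cite[Theorem~2.13]{niketora} and \cite[Proposition~2.2]{niketora} to obtain that $G/Z(G)$ is Frobenius with complement of prime order and that $G'$ is abelian, and then finishes via Corollary~\ref{CG17cor}. Your route bypasses all of this: the single observation that every proper normal subgroup must be abelian, combined with the existence of a maximal normal subgroup of prime index in a finite solvable group, feeds straight into Proposition~\ref{CG7}(a). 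This avoids the nilpotent/non-nilpotent case distinction and the external structural results from \cite{niketora}; the only cost is that one learns less about the internal structure of $G$ along the way.
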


\begin{proof}
If $G$ is a nilpotent group, then in view of  \cite[Proposition 2.6]{niketora}, $G$ is a minimal non-abelian $p$-group for some prime $p$ and therefore by Proposition \ref{CG6},  $G$ is a CG-group. 

Next, suppose $G$ is non-nilpotent. Then by \cite[Theorem 2.13]{niketora}, we have $\frac{G}{Z(G)}$ is a Frobenius group with complement of prime order $p$. Moreover, by \cite[Proposition 2.2]{niketora}, we have $G'$ is abelian. Therefore by Corollary \ref{CG17cor}, $G$ is a CG-group.
\end{proof}

It may be mentioned here that all the examples of CG-groups given in the earlier section are CA-groups. However, there exists CG-groups which are not CA-groups. For example, it can be verify that $S_4$ is a CG-group but not a CA-group. In this connection, we give the following result on CG-groups whose central quotient is $S_4$.

\begin{prop}\label{CG12}
Let $G$ be a finite group such that $\frac{G}{Z(G)} \cong S_4$. Then $G$ is a CG-group if and only if $G' \cong A_4$.
\end{prop}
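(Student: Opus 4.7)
My plan is to split into the two implications.

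For the \emph{if} direction, I would reduce to $S_4$: if $G' \cong A_4$ then, since $A_4$ is centerless, $G' \cap Z(G) \leq Z(G') = 1$, and Lemma~\ref{CG20} gives that $G$ is a CG-group if and only if $G/Z(G) \cong S_4$ is. To verify the latter I would directly enumerate the centralizers of $S_4$ by conjugacy class: one for the identity, three of type $V_4$ (one per pair of disjoint transpositions), four Sylow $3$-subgroups (from the $3$-cycles), three cyclic groups of order $4$ (one per pair of inverse $4$-cycles), and three Sylow $2$-subgroups $D_8$ (one per double transposition). The total is $1+3+4+3+3 = 14 = |A_4|+2$, so $S_4$ is a CG-group and hence so is $G$.

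For the \emph{only if} direction, I would set $K = G' \cap Z(G)$. The canonical isomorphism $G'/K \cong G'Z(G)/Z(G) = (G/Z(G))' \cong A_4$ gives $|G'| = 12\,|K|$, so it suffices to show $K = 1$, equivalently $|\Cent(G)| = 14$: together with the CG-hypothesis this will force $|G'| = 12$ and hence $G' \cong A_4$. As a warm-up bookkeeping step, the quotient $G/K$ satisfies $(G/K)/Z(G/K) \cong S_4$ (since $Z(S_4) = 1$ forces $Z(G/K) = Z(G)/K$) and $(G/K)' \cap Z(G/K) = (G' \cap Z(G))/K = 1$, so by Lemma~\ref{CG20} and the if direction, $G/K$ is itself a CG-group with $|\Cent(G/K)| = 14$.

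To pin down $|\Cent(G)|$ itself I plan to invoke isoclinism invariance: by \cite[Lemma~2.3]{non}, $|\Cent|$ is preserved under isoclinism. Since the Schur multiplier of $S_4$ is $\mathbb{Z}/2$, every group with central quotient $S_4$ is isoclinic to one of the two stem central extensions of $S_4$, namely $S_4$ itself or the unique non-trivial cover $GL(2,3) = 2.S_4$. Proposition~\ref{EX9} at $q = 3$ gives $|\Cent(GL(2,3))| = q^2 + q + 2 = 14$, matching the $|\Cent(S_4)| = 14$ computed above. Hence $|\Cent(G)| = 14$ in either case, and the argument closes.

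The main obstacle is justifying this two-isoclinism-class dichotomy, which rests on the Schur multiplier of $S_4$ and is external to the paper. A more self-contained alternative would be a direct count of $C(x)$ as $\bar x$ ranges over conjugacy class representatives of $S_4 = G/Z(G)$: writing $\pi \colon G \to S_4$ for the quotient, the $3$-cycle and $4$-cycle classes contribute the unambiguous totals of $4$ and $3$ centralizers (since $C(x) = \pi^{-1}(\langle \bar x\rangle)$ whenever $C_{S_4}(\bar x)$ is cyclic), while the transposition and double-transposition classes introduce a dependence on the commutator pairing $V_4 \to K$ that the extension imposes, and one must verify by a case analysis that these two classes always contribute a combined total of $6$ distinct centralizers, independently of the pairing.
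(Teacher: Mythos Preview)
Your \emph{if} direction matches the paper's: both argue $G'\cap Z(G)=1$ (you via $Z(A_4)=1$, the paper via the order count $|G'/(G'\cap Z(G))|=|(G/Z(G))'|=12=|G'|$) and then reduce to $|\Cent(S_4)|=14$, which you enumerate explicitly while the paper takes it as known.

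For the \emph{only if} direction the two arguments diverge. The paper does not go through isoclinism classes or Schur multipliers at all: it simply invokes \cite[Theorem~1.3]{rostami}, which already asserts $|\Cent(G)|=|\Cent(G/Z(G))|$ whenever $G/Z(G)\cong S_4$. From $|\Cent(G)|=14$ and the CG hypothesis one gets $|G'|=12$, hence $|G'\cap Z(G)|=1$, hence $G'\cong (G/Z(G))'\cong A_4$. Your route reproves the same equality $|\Cent(G)|=14$ by classifying the isoclinism families with central quotient $S_4$ and computing $|\Cent|$ on a representative of each, using Proposition~\ref{EX9} for $GL(2,3)$. This is correct, with one caveat: $S_4$ has \emph{two} non-isomorphic Schur covers, not one, so ``the unique non-trivial cover'' is inaccurate as stated. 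However, the two covers are isoclinic to each other (all Schur covers of a fixed group lie in one isoclinism class), so $|\Cent(GL(2,3))|=14$ does cover both and your argument survives. What you gain is independence from the black-box citation \cite[Theorem~1.3]{rostami}; what you pay is the need to import the Hall--Schur theory of stem extensions and the computation $M(S_4)\cong C_2$, which is indeed external to the paper, as you note. The paper's proof is a two-line citation; yours is a genuine reconstruction of the same fact.
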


\begin{proof}
If $G' \cong A_4$, then we have $\mid G' \cap Z(G) \mid=1$ noting that $\mid(\frac{G}{Z(G)})'\mid=\mid \frac{G'}{G' \cap Z(G)} \mid =\mid A_4 \mid$. Therefore by \cite[Lemma 3.1]{en09}, $\mid Cent(G) \mid= \mid Cent(\frac{G}{Z(G)}) \mid=\mid G' \mid+2$.

Conversely, suppose $\mid Cent(G) \mid=\mid G' \mid+2$. In view of \cite[Theorem 1.3]{rostami}, we have $\mid Cent(G) \mid= \mid Cent(\frac{G}{Z(G)}) \mid$ and consequently, $\mid G' \mid=\mid   (\frac{G}{Z(G)})'\mid=\mid \frac{G'}{G' \cap Z(G)} \mid$, forcing $\mid G' \cap Z(G) \mid=1$. Therefore we have $A_4 \cong (\frac{G}{Z(G)})'=G'$.
\end{proof}

\begin{rem}\label{CGREM}
Recall that a minimal non-nilpotent group is a non-nilpotent group all of whose proper subgroups are nilpotent. If $G$ is a finite minimal non-nilpotent group then $G$ is a solvable group of order $p^mq^n$ ($p, q$ are distinct primes) with a unique Sylow $p$-subgroup $P$ and a cyclic Sylow $q$-subgroup $Q$. Moreover, we have $G'=P$ (see \cite{tar}). 
\end{rem}

\begin{prop}\label{CG19}
Let $G$ be a finite minimal non-nilpotent group.
\begin{enumerate}
	\item  If $G'$ is abelian, then $G$ is a CG-group.
	\item  $\frac{G}{Z(G)}$ is a CG-group.
\end{enumerate}. 
\end{prop}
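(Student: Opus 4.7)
The plan is to use the structural description of Schmidt groups from Remark \ref{CGREM}: $G = P \rtimes Q$, where $P$ is the normal Sylow $p$-subgroup, $Q = \langle y \rangle$ is a cyclic Sylow $q$-subgroup of order $q^n$, and $G' = P$. The key preliminary observation is that $P\langle y^q \rangle$ has order $|P|\, q^{n-1} < |G|$, so it is a proper, hence nilpotent, subgroup of $G$; therefore $y^q$ centralizes $P$, and since $Q$ is abelian, $\langle y^q \rangle \leq Z(G)$, while $[G : P\langle y^q\rangle] = q$ is prime.

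For (a), the argument is then immediate: if $G' = P$ is abelian, then $P\langle y^q\rangle = P \times \langle y^q\rangle$ is an abelian normal subgroup of $G$ of prime index $q$, so Proposition \ref{CG7}(a) yields that $G$ is a CG-group.

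For (b), I would show that $G/Z(G)$ is itself a Frobenius group with elementary abelian kernel and cyclic complement of prime order, both abelian, and then apply Proposition \ref{CG17} directly (noting that Frobenius groups have trivial center, so the decomposition of $G/Z(G)$ modulo its own center is just $G/Z(G)$ itself). To pin down $Z(G)$ I would establish three facts. First, $\Phi(P) \leq Z(G)$: since $\Phi(P)$ is characteristic in $P$ and thus normal in $G$, the subgroup $\Phi(P)Q$ is proper and hence nilpotent, forcing $[\Phi(P), Q] = 1$. Second, $Z(G) \cap Q = \langle y^q\rangle$, since any element outside the unique maximal subgroup $\langle y^q\rangle$ would generate all of $Q$ and make $G$ nilpotent. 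Third, and the delicate step, $Z(G) \cap P = \Phi(P)$: using the standard identity $G' = P'[P, Q]$ (valid for $G = P \rtimes Q$ with $Q$ abelian) together with $G' = P$ gives $P = [P, Q]\Phi(P)$, so the operator $T - 1$ on $V := P/\Phi(P)$ (with $T$ the action of $y$) is surjective, hence invertible on the finite space $V$; consequently $y$ fixes no nonzero vector in $V$ and $Z(G) \cap P \leq \Phi(P)$. Combining, $G/Z(G) \cong V \rtimes (Q/\langle y^q\rangle)$ is a Frobenius group whose kernel $V$ and complement $C_q$ are both abelian, and Proposition \ref{CG17} applied to $G/Z(G)$ yields the conclusion.

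The main obstacle is the third step of part (b): establishing that the action of $y$ on $V = P/\Phi(P)$ has no nonzero fixed vector. This is where the minimal non-nilpotency hypothesis is really exploited, via the Burnside-basis-type identity $P = [P, Q]\Phi(P)$ combined with $G' = P$.
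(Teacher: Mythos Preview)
Your argument for part (a) is correct and in fact more direct than the paper's. The paper invokes \cite[Lemma~2.3]{amiri17} to get that $G/Z(G)$ is Frobenius with abelian kernel $PZ(G)/Z(G)$ and cyclic complement, and then applies Proposition~\ref{CG17}. You instead exhibit $P\langle y^{q}\rangle$ as an abelian normal subgroup of prime index and appeal to Proposition~\ref{CG7}. Your route avoids the Frobenius machinery entirely and uses only the minimal non-nilpotency hypothesis in the obvious way; it is the cleaner argument.

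For part (b) your overall strategy---showing that $G/Z(G)$ is a Frobenius group with elementary abelian kernel and complement of prime order, then applying Proposition~\ref{CG17} (with trivial centre)---is exactly the content of the lemma the paper cites from \cite{amiri17}, so you are essentially re-proving that lemma. However, your justification of the first step, ``$\Phi(P)\le Z(G)$'', is incomplete. From the nilpotency of the proper subgroup $\Phi(P)Q$ you correctly deduce $[\Phi(P),Q]=1$, but this only gives $\Phi(P)\le C_G(Q)$; to land in $Z(G)$ you also need $\Phi(P)\le Z(P)$, and nothing in your argument establishes that. Without it, $Z(G)\cap P = Z(P)\cap\Phi(P)$ may be strictly smaller than $\Phi(P)$, and then $PZ(G)/Z(G)\cong P/(Z(G)\cap P)$ need not be elementary abelian (or even abelian), so your identification $G/Z(G)\cong V\rtimes C_q$ breaks down. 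The missing inclusion $\Phi(P)\le Z(P)$ is indeed part of the full Schmidt--Iwasawa structure theorem for minimal non-nilpotent groups, but Remark~\ref{CGREM} as stated does not include it, and it requires a separate argument (for example, the three-subgroups lemma yields $[[P,Q],\Phi(P)]=1$, and one must then push further to get $\Phi(P)$ central in $P$). Once you either prove this or cite the full Schmidt theorem, the rest of your argument for (b) goes through; the paper simply outsources the whole package to \cite[Lemma~2.3]{amiri17}.
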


\begin{proof}

a) By Remark \ref{CGREM}, we have $G'=P$. Again, by \cite[Lemma 2.3]{amiri17}, we have $\frac{G}{Z(G)} = \frac{PZ(G)}{Z(G)} \rtimes \frac{QZ(G)}{Z(G)}$ is a Frobenius group with cyclic complement. Now, if $G'$ is abelian, then $PZ(G)$ will be abelian and hence $G$ is a CG-group by Proposition \ref{CG17}.\\

b) Let $G$ be a minimal non-nilpotent group. By \cite[Lemma 2.3]{amiri17},  $\frac{G}{Z(G)}$ is a Frobenius group with elementary abelian kernel $\frac{K}{Z(G)}$ and complement of prime order. Again, by \cite[Theorem 2.2]{herzog}, $(\frac{G}{Z(G)})' =\frac{K}{Z(G)}$. Therefore from the definition of Frobenius group, $\frac{G}{Z(G)}$ is a CG-group.
\end{proof}

\begin{prop}\label{CG203}
Let $G$ be a finite group such that $\frac{G}{Z(G)}$ is minimal non-nilpotent. If $G'$ is abelian, then $G$ is a CG-group.
\end{prop}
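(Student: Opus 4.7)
The strategy is to reduce to Corollary \ref{CG17cor}. Since $G/Z(G)$ is minimal non-nilpotent, the same structural fact used in part (b) of Proposition \ref{CG19} applies: by \cite[Lemma 2.3]{amiri17}, $G/Z(G)$ decomposes as a Frobenius group
\[
\frac{G}{Z(G)} \;=\; \frac{K}{Z(G)} \rtimes \frac{H}{Z(G)},
\]
where $K/Z(G)$ is the elementary abelian Frobenius kernel and $H/Z(G)$ is a Frobenius complement of prime order.

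Next I would observe that $H$ is automatically abelian. Indeed, $H/Z(G)$ is of prime order, hence cyclic, and $Z(G)\le Z(H)$, so $H/Z(H)$ is a quotient of the cyclic group $H/Z(G)$ and therefore cyclic; this forces $H$ to be abelian.

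Having established that $G/Z(G)$ is a Frobenius group of the form $K/Z(G)\rtimes H/Z(G)$ with $H$ abelian, together with the hypothesis that $G'$ is abelian, Corollary \ref{CG17cor} applies verbatim and yields that $G$ is a CG-group.

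The only mild point to check is that $H$ is abelian; everything else is a direct invocation of earlier results. In fact the argument is essentially identical to part (a) of Proposition \ref{CG19} once we notice that the hypothesis on $G/Z(G)$ here is merely a relaxation of ``$G$ itself is minimal non-nilpotent'' to ``$G/Z(G)$ is minimal non-nilpotent'', and Corollary \ref{CG17cor} was designed precisely to absorb this central extension.
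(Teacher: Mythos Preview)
Your proof has a genuine gap in its first step. The lemma \cite[Lemma 2.3]{amiri17}, as used in Proposition~\ref{CG19}, says: \emph{if $G$ is minimal non-nilpotent, then $G/Z(G)$ is a Frobenius group}. Here, the minimal non-nilpotent group is $G/Z(G)$, so the lemma only tells you that
\[
\bigl(G/Z(G)\bigr)\big/ Z\bigl(G/Z(G)\bigr)
\]
is Frobenius, not that $G/Z(G)$ itself is. Since Frobenius groups have trivial centre, you are implicitly assuming $Z(G/Z(G))=1$, but minimal non-nilpotent groups need not have trivial centre (for instance $C_3\rtimes C_4$, whose centre has order~$2$). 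Proving that a \emph{capable} minimal non-nilpotent group must have trivial centre is a separate, non-trivial statement which you have not addressed; without it, Corollary~\ref{CG17cor} cannot be invoked.

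The paper avoids this difficulty altogether by taking a different route. From the Schmidt structure (Remark~\ref{CGREM}) it writes $G/Z(G)=\frac{G'Z(G)}{Z(G)}\rtimes\frac{Q}{Z(G)}$ with $Q/Z(G)$ cyclic, but does \emph{not} claim this is a Frobenius decomposition. Instead, since $G'$ (hence $G'Z(G)$) is abelian and $G/G'Z(G)\cong Q/Z(G)$ is cyclic, Proposition~\ref{isaacs} yields $|G'\cap Z(G)|=1$ directly. Then \cite[Lemma 3.1]{en09} gives $|\Cent(G)|=|\Cent(G/Z(G))|$, and Proposition~\ref{CG19}(b) finishes the count. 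So the paper's argument is logically independent of whether $G/Z(G)$ is actually Frobenius.
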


\begin{proof}
In view of Remark \ref{CGREM}, we have $\frac{G}{Z(G)} = \frac{G'Z(G)}{Z(G)} \rtimes \frac{Q}{Z(G)}$, where $Q$ is a subgroup of $G$ such that $\frac{Q}{Z(G)}$ is cyclic.  Now, if $G'$ is abelian, then $G'Z(G)$ will be abelian and consequently, by third isomprphic theorem, we have  $\frac{G}{G'Z(G)} \cong \frac{Q}{Z(G)}$. Therefore in view of Proposition \ref{isaacs}, we have $\mid G'Z(G) \mid= \mid G' \mid \mid G'Z(G) \cap Z(G) \mid = \mid G' \mid \mid Z(G) \mid$ and hence $\mid G' \cap Z(G)\mid=1$. Therefore using \cite[Lemma 3.1]{en09} and Proposition \ref{CG19}, we have 
$\mid Cent(G) \mid= \mid Cent(\frac{G}{Z(G)}) \mid= \mid (\frac{G}{Z(G)})'\mid +2=\mid G' \mid+2$.
\end{proof}

\begin{prop}\label{CG23}
Let $G$ be a finite group such that $\frac{G}{Z(G)}$ is minimal non-abelian which is not a $p$-group, $p$ being prime. If $G'$ is abelian, then $G$ is a CG-group.
\end{prop}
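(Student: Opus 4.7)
My plan is to follow the template of the proof of Proposition \ref{CG203}: (i) use the Miller--Moreno description of $G/Z(G)$ to identify its derived subgroup with an abelian normal subgroup of $G$ having cyclic quotient; (ii) apply Proposition \ref{isaacs} to deduce $|G'\cap Z(G)|=1$; and (iii) combine \cite[Lemma 3.1]{en09} with Proposition \ref{CG6} to compute $|\Cent(G)|$.

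By the description of finite minimal non-abelian groups that are not $p$-groups recalled just before Proposition \ref{CG6}, we may write
\[
\frac{G}{Z(G)}=\frac{K}{Z(G)}\rtimes\frac{H}{Z(G)},
\]
where $K/Z(G)$ is the elementary abelian minimal normal $q$-Sylow subgroup of $G/Z(G)$ and $H/Z(G)$ is the cyclic $p$-Sylow. Since $H/Z(G)$ is abelian, $(G/Z(G))'\subseteq K/Z(G)$; combined with the minimal normality of $K/Z(G)$ and the fact that $G/Z(G)$ is non-abelian, this forces $(G/Z(G))'=K/Z(G)$, i.e.\ $G'Z(G)=K$. Because $G'$ is abelian by hypothesis and $Z(G)$ is central, $K=G'Z(G)$ is abelian.

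Now $K\vartriangleleft G$ is abelian with $G/K\cong H/Z(G)$ cyclic, so Proposition \ref{isaacs} gives $|K|=|G'|\,|K\cap Z(G)|=|G'|\,|Z(G)|$ (using $Z(G)\subseteq K$). Comparing with $|K|=|G'Z(G)|=|G'|\,|Z(G)|/|G'\cap Z(G)|$ yields $|G'\cap Z(G)|=1$, and hence $|K/Z(G)|=|G'|$. Applying \cite[Lemma 3.1]{en09} and Proposition \ref{CG6}(a) to the minimal non-abelian non-$p$-group $G/Z(G)$ then gives
\[
|\Cent(G)|=\bigl|\Cent(G/Z(G))\bigr|=\bigl|(G/Z(G))'\bigr|+2=|G'|+2,
\]
so $G$ is a CG-group.

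The one substantive point is the identification $(G/Z(G))'=K/Z(G)$, which rests on the minimal normality of $K/Z(G)$ together with the abelian complement; beyond that everything is a direct application of earlier results. Alternatively, one could observe that $G/Z(G)$ is Frobenius with cyclic complement (the action of $H/Z(G)$ on $K/Z(G)$ being fixed-point-free is forced by minimal non-abelianness) and finish in one step via Corollary \ref{CG17cor}.
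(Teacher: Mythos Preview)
Your argument is correct. The main line you give---identifying $(G/Z(G))'=K/Z(G)$ via minimal normality, showing $K=G'Z(G)$ is abelian, applying Proposition~\ref{isaacs} to get $|G'\cap Z(G)|=1$, and then finishing with \cite[Lemma 3.1]{en09} together with Proposition~\ref{CG6}(a)---is exactly the template of Proposition~\ref{CG203} and goes through without difficulty.

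The paper, however, takes precisely the shortcut you mention at the end: it invokes \cite[Theorem 1.1]{rostami} to conclude that $G/Z(G)$ is a Frobenius group with cyclic complement, and then applies Corollary~\ref{CG17cor} in one stroke. So your ``alternative'' is the paper's actual proof. The difference is purely one of packaging: your main argument unwinds the content of Corollary~\ref{CG17cor} (and Lemma~\ref{CGcor18}) by hand in this special case, while the paper leans on the already-established corollary. Your route has the minor advantage of not needing to cite the Frobenius structure from \cite{rostami}, since the Miller--Moreno description plus minimal normality of $K/Z(G)$ suffices; the paper's route is shorter once that citation is accepted.
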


\begin{proof}
In the present scenario, in view of \cite[Theorem 1.1]{rostami}, $\frac{G}{Z(G)}$ is a Frobenius group with cyclic complement. Now, the result follows using Corollary \ref{CG17cor}.
\end{proof}

Finally, we conclude the paper with a counterexample to the following conjecture \cite[Conjecture 2.3]{con} which is also a counterexample to \cite[Question 1.2]{con} :

\begin{conj}\label{conj}
Let $G$ and $S$ be  finite groups. Is it true that if  $\mid \Cent(G) \mid= \mid \Cent(S) \mid$ 
and $\mid G' \mid= \mid S' \mid$, then $G$ is isoclinic to $S$?   
\end{conj}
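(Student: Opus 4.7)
The statement is a conjecture from Khoramshahi--Zarrin that we intend to refute, so the plan is to exhibit two finite groups $G$ and $S$ with $|\Cent(G)|=|\Cent(S)|$ and $|G'|=|S'|$ that are not isoclinic. Since isoclinic groups must have isomorphic central quotients, the simplest route is to locate such a pair with $|G/Z(G)|\neq|S/Z(S)|$. A convenient place to search is the catalogue of CG-groups developed earlier in the paper: if $G$ and $S$ are both CG-groups with $|G'|=|S'|$, then automatically $|\Cent(G)|=|G'|+2=|S'|+2=|\Cent(S)|$, so the second numerical hypothesis is for free, and only the failure of isoclinism remains to be checked.

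I propose the pair $G=A_4$ and $S=Q_{16}$. By Proposition~\ref{CG4} the alternating group $A_4$ is a CG-group with $A_4'=V_4$ of order $4$, and by Proposition~\ref{EX12} the generalized quaternion group $Q_{16}$ is a CG-group with $Q_{16}'$ cyclic of order $4$. Thus $|A_4'|=|Q_{16}'|=4$ and the CG-group identity forces $|\Cent(A_4)|=|\Cent(Q_{16})|=6$, so both hypotheses of the conjecture are satisfied. However $A_4/Z(A_4)\cong A_4$ has order $12$, while $Q_{16}/Z(Q_{16})\cong D_8$ has order $8$, so the central quotients are not isomorphic and no isoclinism between $G$ and $S$ can exist.

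The only step requiring explicit verification is the identity $|\Cent|=6$ for each group, which is the main (though mild) obstacle. For $A_4$ the distinct centralizers are $A_4$ itself, the common Klein four-centralizer of the three involutions, and the four cyclic subgroups of order $3$, totalling six. For $Q_{16}=\langle a,b\mid a^8=1,\,b^2=a^4,\,bab^{-1}=a^{-1}\rangle$, besides the whole group and $\langle a\rangle$, the outer elements $b,ab,a^2b,a^3b$ give four distinct cyclic centralizers of order $4$ (each of the form $\langle a^kb\rangle$), again totalling six. This completes the counterexample.
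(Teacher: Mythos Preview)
Your counterexample is correct and follows essentially the same strategy as the paper: pick two CG-groups with the same commutator order but central quotients of different sizes, so that isoclinism is impossible while both numerical hypotheses hold automatically. The paper merely chooses a smaller pair---a non-abelian group $G$ of order $27$ together with $S_3$, where \cite[Corollary~2.5]{baishya} gives $|\Cent(G)|=|\Cent(S_3)|=5$ and $|G'|=|S_3'|=3$, yet $|G/Z(G)|=9\neq 6=|S_3/Z(S_3)|$---so no explicit centralizer count is needed.
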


Consider any non-abelian group $G$ of order $27$ and $S_3$. In view of \cite[Corollary 2.5]{baishya}, we have $\mid \Cent(G) \mid= \mid \Cent(S_3) \mid=5$ and $\mid G' \mid= \mid {S_3}' \mid=3$. But $G$ is not isoclinic to $S_3$.

%\section*{Acknowledgment}

\end{document}